\documentclass[reqno,a4paper,11pt]{amsart}
\usepackage{amsmath, amsthm, amscd, amsfonts, amssymb, graphicx, color}
\usepackage{amssymb}
\usepackage[english]{babel}
\usepackage[dvips, lmargin=4cm, rmargin=4cm, tmargin=4cm, bmargin=4cm]{geometry}
\usepackage[colorlinks=true,urlcolor=blue,linkcolor=blue, citecolor=red]{hyperref}
\usepackage{hyperref}
\usepackage[T1]{fontenc}
\usepackage{amssymb}
\usepackage{tikz}
\usetikzlibrary{backgrounds}
\usepackage[english]{babel}
\usepackage{enumitem}
\usepackage{dsfont}
\newtheorem{thm}{Theorem}[section]
\newtheorem{defini}{Definition}[section]
\newtheorem{rem}{Remark}[section]
\newtheorem{lem}{Lemma}[section]
\newtheorem{prop}{Proposition}[section]

\numberwithin{equation}{section}

\def \R {\mathbb{R} }

\begin{document}
\title[Nonlocal double phase Neumann and Robin problem  ]{Nonlocal double phase Neumann and Robin problem with  variable $s(\cdot,\cdot)-$order}
\author[  Mohammed SRATI]
{Mohammed SRATI}
\address{Mohammed SRATI\newline
 High School of Education and Formation (ESEF), University Mohammed First, Oujda, Morocco.}
\email{srati93@gmail.com}
\subjclass[2010]{46E35, 35R11,  35J20, 47G20.}
\keywords{ Fractional Musielak-Sobolev spaces, Nonlocal problems, Neumann boundary condition, Robin boundary condition, Direct variational method.}
\maketitle
\begin{abstract}
 In this paper, we develop some properties of the $a_{x,y}(\cdot)$-Neumann derivative for the nonlocal   $s(\cdot,\cdot)$-order  operator in fractional Musielak-Sobolev spaces with  variable $s(\cdot,\cdot)-$order. Therefore we prove the basic proprieties of the correspondent function spaces.  In the second part of this paper, by means of Ekeland's variational principal and direct variational approach,  we  prove  the existence of  weak solutions to the following double phase Neumann and Robin problem  with variable $s(\cdot,\cdot)-$order :
  $$
    \left\{ 
     \begin{array}{clclc}
  (-\Delta)^{s_1(x,\cdot)}_{a^1_{(x,\cdot)}} u+(-\Delta)^{s_2(x,\cdot)}_{a^2_{(x,\cdot)}} u +\widehat{a}^1_x(|u|)u+\widehat{a}^2_x(|u|)u & = & \lambda f(x,u)    \text{ in } \Omega, \\\\
      \mathcal{N}^{s_1(x,\cdot)}_{a^1(x,\cdot)}u+\mathcal{N}^{s_2(x,\cdot)}_{a^2(x,\cdot)}u+\beta(x)\left( \widehat{a}^1_x(|u|)u+\widehat{a}^2_x(|u|)u \right)  & = & 0 \hspace*{0.2cm}   \text{ in }  \R^N\setminus \Omega,
     \end{array}
     \right. 
  $$
   where $(-\Delta)^{s_i(x,\cdot)}_{a^i_{(x,\cdot)}}$ and $\mathcal{N}^{s_i(x,\cdot)}_{a^i(x,\cdot)}$ denote the variable $s_i(\cdot,\cdot)$-order fractional Laplace operator and the nonlocal normal $a_i(\cdot,\cdot)$-derivative of $s_i(\cdot,\cdot)$-order, respectively. 
\end{abstract}
\tableofcontents
\section{Introduction}\label{S1}
The idea of a variable-order fractional derivative is an interesting extension of classical fractional derivatives. In this case, the derivative order \(\alpha\) is not constant but depends on the position (and possibly time). This approach allows modeling phenomena where local diffusion or memory properties change throughout the domain.
The variable-order fractional derivative can be defined by adapting the classical definitions of fractional derivatives. If \(\alpha = \alpha(x)\) is a function of \(x\), we can define the variable-order fractional derivative in several ways. As a result, we can define fractional spaces with a variable order of derivation. In this sense Biswas et al \cite{s1} defined the fractional Sobolev space with a fractional exponent with a variable order, subsequently Srati in \cite{srati} presented an extension of these results, namely the fractional Musielak Sobolev space with  variable $s(\cdot,\cdot)-$order, also he defined the nonlocal   $s(\cdot,\cdot)$-order  operator of elliptic type defined as follows
    {\small  $$
               \begin{aligned}
               (-\Delta)^{s(x,\cdot)}_{a_{(x,\cdot)}}u(x)=2\lim\limits_{\varepsilon\searrow 0} \int_{\R^N\setminus B_\varepsilon(x)} a_{(x,y)}\left( \dfrac{|u(x)-u(y)|}{|x-y|^{s(x,y)} }\right)\dfrac{u(x)-u(y)}{|x-y|^{s(x,y)}} \dfrac{dy}{|x-y|^{N+s(x,y)} }
               \end{aligned}
                $$}  
  for all $x\in \R^N$, where:\\
  $\bullet$ $s(\cdot,\cdot)~ : \overline{\Omega}\times\overline{\Omega}\rightarrow (0,1)$ is a continuous function such that:
  \begin{equation}
  s(x,y)=s(y,x)~~ \forall x,y \in \overline{\Omega}\times\overline{\Omega},
  \end{equation}
  \begin{equation}
  0<s^-=\inf\limits_{\overline{\Omega}\times\overline{\Omega}}s(x,y)\leqslant s^+=\sup\limits_{\overline{\Omega}\times\overline{\Omega}}s(x,y)<1.
  \end{equation}
   $\bullet$ $a_{(x,y)}(t):=a(x,y,t) : \overline{\Omega}\times\overline{\Omega}\times \R\longrightarrow \R$   is symmetric function :
 \begin{equation}\label{n4}
 a(x,y,t)=a(y,x,t) ~~ \forall(x,y,t)\in \overline{\Omega}\times\overline{\Omega}\times \R,\end{equation}

 In this work, we continue the study of the class of fractional problems in the new space $W^{s(x,y)}L_{\varPhi_{x,y}}(\Omega)$. Our main objective is to investigate, for the first time, a fractional problem involving the nonlocal $s(\cdot,\cdot)$-order elliptic operator with nonlocal Neumann and Robin boundary conditions.\\

The Neumann boundary condition, credited to the German mathematician Neumann, is also known as the boundary condition of the second kind. In this type of boundary condition, the value of the gradient of the dependent variable normal to the boundary, $\frac{\partial \phi}{\partial n}$, is prescribed on the boundary. \\

In the last years, great attention has been devoted to the study of nonlocal problems with fractional Neumann boundary condition, In this contex, Dipierro, Ros-Oton, and Valdinoci, in \cite{N5}  introduce an extension for the classical Neumann condition $\frac{\partial \phi}{\partial n} = 0$ on $\partial\Omega$ consists in the nonlocal prescription

\begin{equation}\label{n1}
     \begin{aligned}
     \mathcal{N}^s_2u(x)= \int_{\Omega}  \dfrac{u(x)-u(y)}{|x-y|^{N+2s}}dy ,~~\forall x\in \R^N\setminus \Omega.
               \end{aligned}
\end{equation}
Other Neumann problems for the fractional Laplacian (or other nonlocal operators) were introduced in \cite{N1,N2,N3}. All these different Neumann problems for nonlocal operators recover the classical Neumann problem as a limit case, and most of them have clear probabilistic interpretations as well. 
An advantage of this approach (\ref{n1}) is that the problem has a variational structure. 
In \cite{N4}, Mugnai and Proietti Lippi  introduced an extension of (\ref{n1}) for $p\neq 2$.\\

 Bahrouni et al in \cite{N6}  introduced the nonlocal Neumann and Robin boundary conditions corresponding to the fractional  $p(\cdot,\cdot)$-Laplacian as follows:
\begin{equation}\label{n3}
      \begin{aligned}
      \mathcal{N}^s_{p(x,\cdot)}u(x)= \int_{\Omega}  \dfrac{|u(x)-u(y)|^{p(x,y)-2}(u(x)-u(y))}{|x-y|^{N+sp(x,y)} }dy,~~\forall x\in \R^N\setminus \Omega,
                \end{aligned}
\end{equation}
where $p: \R^{2N} \longrightarrow (1, +\infty)$ is a symmetric, continuous function bounded and  $p(\cdot) = p(\cdot,\cdot)$. $\mathcal{N}^s_{p(x,\cdot)}$   is the nonlocal normal $p(\cdot,\cdot)$-derivative [or $p(\cdot,\cdot)$-Neumann boundary condition] and describes the natural Neumann boundary condition in the presence of the fractional $p(\cdot,\cdot)$-Laplacian, (\ref{n3}) extends the notion of the nonlocal normal derivative  for the fractional $p$-Laplacian. See also \cite{kamali}.\\

On other extention  of $p$-Neumann boundary condition, has proposed by Bahrouni and Salort in \cite{N7} as following
{\small$$
     \begin{aligned}
     \mathcal{N}^s_{a(\cdot)}u(x)= \int_{\Omega} a\left( \dfrac{|u(x)-u(y)|}{|x-y|^s }\right)\dfrac{u(x)-u(y)}{|x-y|^s} \dfrac{dy}{|x-y|^{N+s}},~~\forall x\in \R^N\setminus \Omega,
               \end{aligned}
                $$ }
where $a = A'$ such that $A$ is a Young function   and $s \in (0, 1)$. To read other problems driven by a nonlocal operator of the elliptic type in fractional Orlicz-Sobolev spaces, we refer the reader to the works \cite{SRN1,SRN2,SRT}. \\

Very recently, Srati et al. \cite{srati2},  introduced the natural Neumann boundary condition in the presence of the fractional $a_{x,y}(\cdot)$-Laplacian in fractional Musielak Sobolev spaces as following 
{\small$$
     \begin{aligned}
     \mathcal{N}^s_{a(x,\cdot)}u(x)= \int_{\Omega} a_{(x,y)}\left( \dfrac{|u(x)-u(y)|}{|x-y|^s }\right)\dfrac{u(x)-u(y)}{|x-y|^s} \dfrac{dy}{|x-y|^{N+s}},~~\forall x\in \R^N\setminus \Omega,
               \end{aligned}
                $$ }
                  denotes $a_{(x,\cdot)}-$Neumann boundary condition and present the natural Neumann boundary condition for $(-\Delta)^s_{a_{(x,\cdot)}}$ in fractional Musielak-Sobolev space. \\
                  
For the fractional derivative case of variable order, Biswas, Bahrouni and  Carvalho in \cite{x1}  introduced the nonlocal Neumann and Robin boundary conditions corresponding to the fractional  $s(\cdot,\cdot) \& p(\cdot,\cdot)$-Laplacian as follows:
\begin{equation}\label{n33}
      \begin{aligned}
      \mathcal{N}^{s(x,\cdot)}_{p(x,\cdot)}u(x)= \int_{\Omega}  \dfrac{|u(x)-u(y)|^{p(x,y)-2}(u(x)-u(y))}{|x-y|^{N+s(x,y)p(x,y)}}dy,~~\forall x\in \R^N\setminus \Omega,
                \end{aligned}
\end{equation}
where $p(\cdot,\cdot): \R^{2N} \longrightarrow (1, +\infty)$  and $s(\cdot,\cdot)~ : \overline{\Omega}\times\overline{\Omega}\rightarrow (0,1)$ are a continuous and symmetric functions.\\

 The use of fractional Neumann conditions with variable order in modular spaces, such as Orlicz spaces or Lebesgue spaces with variable exponent, extends the flexibility and applicability of mathematical models to more complex and realistic scenarios. Indeed in Orlicz spaces, this makes it possible to model data with growth conditions which differ from the polynomial growth dictated by $L_p$ spaces. For example, Orlicz spaces can handle exponential growth, which is useful in many applications like nonlinear elasticity, fluid dynamics, and image processing. In Variable Exponent Lebesgue spaces: These spaces generalize classical Lebesgue spaces by allowing the exponent $t^p$ to vary with the position $x$. This flexibility is crucial in problems where the regularity of the solution may change across the domain, such as in materials with spatially varying properties or in image processing where different regions of the image may require different levels of smoothing.\\

For that, in this paper, we introduce the fractional Neumann boundary condition   with variable order in the presence of the  nonlocal $s(\cdot,\cdot)$-order elliptic operator. Therefore  we are concerned with the existence of  weak solutions to the following Neumann-Robin problem
 $$\label{P}
 (\mathcal{P}_a)  \left\{ 
    \begin{array}{clclc}
 (-\Delta)^{s_1(x,\cdot)}_{a^1_{(x,\cdot)}} u+(-\Delta)^{s_2(x,\cdot)}_{a^2_{(x,\cdot)}} u +\widehat{a}^1_x(|u|)u+\widehat{a}^2_x(|u|)u & = & \lambda f(x,u)    \text{ in } \Omega, \\\\
     \mathcal{N}^{s_1(x,\cdot)}_{a^1(x,\cdot)}u+\mathcal{N}^{s_2(x,\cdot)}_{a^2(x,\cdot)}u+\beta(x)\left( \widehat{a}^1_x(|u|)u+\widehat{a}^2_x(|u|)u \right)  & = & 0 \hspace*{0.2cm}   \text{ in }  \R^N\setminus \Omega,
    \end{array}
    \right. 
 $$
  where $\Omega$ is an open bounded subset in $\R^N$, $N\geqslant 1$,   with Lipschitz boundary $\partial \Omega$, 
 $f: \Omega\times \R \longrightarrow \R$ is a Carath\'eodory function, $\beta\in L^{\infty}(\R^N\setminus \Omega)$ such that $\beta\geqslant 0$ in $\R^N\setminus \Omega$ and  $(-\Delta)^{s_i(x,\cdot)}_{a^i_{(x,\cdot)}}$ $(i = 1, 2)$  are the nonlocal integro-differential operators of elliptic type defined as follows
    {\small  $$
               \begin{aligned}
               (-\Delta)^{s_i(x,\cdot)}_{a^i_{(x,\cdot)}}u(x)=2\lim\limits_{\varepsilon\searrow 0} \int_{\R^N\setminus B_\varepsilon(x)} a^i_{(x,y)}\left( \dfrac{|u(x)-u(y)|}{|x-y|^{s_i(x,y)} }\right)\dfrac{u(x)-u(y)}{|x-y|^{s_i(x,y)}} \dfrac{dy}{|x-y|^{N+s_i(x,y)} },
               \end{aligned}
                $$}  
  for all $x\in \R^N$, where:\\
    $\bullet$ $s_i(\cdot,\cdot)~ : \overline{\Omega}\times\overline{\Omega}\rightarrow (0,1)$ is a continuous function such that:
    \begin{equation}
    s_i(x,y)=s_i(y,x),~~ \forall x,y \in \overline{\Omega}\times\overline{\Omega},
    \end{equation}
    \begin{equation}
    0<s_i^-=\inf\limits_{\overline{\Omega}\times\overline{\Omega}}s_i(x,y)\leqslant s_i^+=\sup\limits_{\overline{\Omega}\times\overline{\Omega}}s_i(x,y)<1.
    \end{equation}
     $\bullet$  $(x,y,t)\mapsto a^i_{(x,y)}(t):=a^i(x,y,t) : \overline{\Omega}\times\overline{\Omega}\times \R\longrightarrow \R$ $(i = 1, 2)$  are symmetric functions :
 \begin{equation}\label{n4}
 a^i(x,y,t)=a^i(y,x,t) ~~ \forall(x,y,t)\in \overline{\Omega}\times\overline{\Omega}\times \R,\end{equation}
   and the functions : $\varphi^i(\cdot,\cdot,\cdot) : \overline{\Omega}\times\overline{\Omega}\times \R \longrightarrow \R$ $(i = 1, 2)$ defined by  
$$
  \varphi^i_{x,y}(t):=\varphi^i(x,y,t)= \left\{ 
          \begin{array}{clclc}
        a^i(x,y,|t|)t   & \text{ for }& t\neq 0, \\\\
          0  & \text{ for } & t=0,
          \end{array}
          \right. 
$$
are increasing homeomorphisms from $\R$ onto itself. For $i = 1, 2$, let 
$$\varPhi^i_{x,y}(t):=\varPhi^i(x,y,t)=\int_{0}^{t}\varphi^i_{x,y}(\tau)d\tau~~\text{ for all } (x,y)\in \overline{\Omega}\times\overline{\Omega},~~\text{ and all } t\geqslant 0.$$  
Then, $\varPhi^i_{x,y}$ are a Musielak functions (see \cite{mu}).

Also, we take $ \widehat{a}^i_x(t):=\widehat{a}^i(x,t)=a^i_{(x,x)}(t)  ~~ \forall~ (x,t)\in \overline{\Omega}\times \R$ $(i = 1, 2)$. Then the functions $\widehat{\varphi}^i(\cdot,\cdot) : \overline{\Omega}\times \R \longrightarrow \R$ defined  by :
  $$
     \widehat{\varphi}^i_{x}(t):=\widehat{\varphi}^i(x,t)= \left\{ 
          \begin{array}{clclc}
        \widehat{a}^i(x,|t|)t   & \text{ for }& t\neq 0, \\\\
          0  & \text{ for } & t=0,
          \end{array}
          \right. 
       $$
are increasing homeomorphisms from $\R$ onto itself. If we set 
\begin{equation}\label{phi}
\widehat{\varPhi}^i_{x}(t):=\widehat{\varPhi}^i(x,t)=\int_{0}^{t}\widehat{\varphi}^i_{x}(\tau)d\tau ~~\text{ for all}~~ t\geqslant 0.
\end{equation}  
Then, $\widehat{\varPhi}^i_{x}$ is also a Musielak function.\\
 
Furthermore, $\mathcal{N}^{s_i(x,\cdot)}_{a^i_{(x,\cdot)}}$ $(i = 1, 2)$ are defined by
{\small$$
     \begin{aligned}
     \mathcal{N}^{s_i(x,\cdot)}_{a^i{(x,\cdot)}}u(x)= \int_{\Omega} a^i_{(x,y)}\left( \dfrac{|u(x)-u(y)|}{|x-y|^{s_i(x,y)} }\right)\dfrac{u(x)-u(y)}{|x-y|^{s_i(x,y)}} \dfrac{dy}{|x-y|^{N+s_i(x,y)}},~~\forall x\in \R^N\setminus \Omega,
               \end{aligned}
                $$ }
                  denotes $a^i_{(x,\cdot)}-$Neumann boundary condition and present the natural Neumann boundary condition for $(-\Delta)^{s_i(x,\cdot)}_{a^i_{(x,\cdot)}}$ in fractional Musielak-Sobolev space.

       This paper is organized as follows, In Section \ref{S1}, we  set  the problem  \hyperref[P]{$(\mathcal{P}_{a})$} and the related hypotheses. Moreover,  we are introduced the new  Neumann boundary condition  associated  to nonlocal $s(\cdot,\cdot)$-order elliptic operator.  The Section \ref{S2}, is devoted to recall
   some properties of $s(x,y)$-fractional Musielak-Sobolev spaces. In section \ref{S3}, we introduce the corresponding function space for weak solutions of \hyperref[P]{$(\mathcal{P}_{a})$}, and we prove some properties, and state
   the corresponding Green formula for problems such as \hyperref[P]{$(\mathcal{P}_{a})$}. In section \ref{S4},    by means of Ekeland's variational principle and direct variational approach,
       we obtain the existence  of  a nontrivial weak solution for our Problem.    
        \section{Preliminaries results}\label{S2}                      
To deal with this situation we define the fractional Musielak-Sobolev space with  variable $s(\cdot,\cdot)-$order to
investigate Problem \hyperref[P]{$(\mathcal{P}_{a})$}. Let us recall the definitions and some elementary
properties of this spaces. We refer the reader to \cite{benkirane,benkirane2,3,SRH,srati} for further reference
and for some of the proofs of the results in this section.

 For the function $\widehat{\varPhi}_x$ $(i=1,2)$ given in (\ref{phi}), we introduce the Musielak space as follows
  $$L_{\widehat{\varPhi}^i_x} (\Omega)=\left\lbrace u : \Omega \longrightarrow \R \text{ mesurable }: \int_\Omega\widehat{\varPhi}^i_x(\lambda |u(x)|)dx < \infty \text{ for some } \lambda>0 \right\rbrace. $$
The space $L_{\widehat{\varPhi}^i_x} (\Omega)$ is a Banach space endowed with the Luxemburg norm 
$$||u||_{\widehat{\varPhi}^i_x}=\inf\left\lbrace \lambda>0 \text{ : }\int_\Omega\widehat{\varPhi}^i_x\left( \dfrac{|u(x)|}{\lambda}\right) dx\leqslant 1\right\rbrace. $$
 The conjugate function of $\varPhi^i_{x,y}$ $(i=1,2)$ is defined by $\overline{\varPhi^i}_{x,y}(t)=\int_{0}^{t}\overline{\varphi^i}_{x,y}(\tau)d\tau$ $\text{ for all } (x,y)\in\overline{\Omega}\times\overline{\Omega}$  $\text{ and all } t\geqslant 0$, where $\overline{\varphi^i}_{x,y} : \R\longrightarrow \R$ is given by $\overline{\varphi^i}_{x,y}(t):=\overline{\varphi^i}(x,y,t)=\sup\left\lbrace \alpha \text{ : } \varphi^i(x,y,\alpha)\leqslant t\right\rbrace.$ Furthermore, we have the following H\"older type inequality
  \begin{equation}
   \left| \int_{\Omega}uvdx\right| \leqslant 2||u||_{\widehat{\varPhi}^i_x}||v||_{\overline{\widehat{\varPhi}^i}_x}\hspace*{0.5cm} \text{ for all } u \in L_{\widehat{\varPhi}^i_x}(\Omega)  \text{ and } v\in L_{\overline{\widehat{\varPhi}^i}_x}(\Omega).
   \end{equation}
    Throughout this paper, for $i=1,2$, we assume that there exist two positive constants ${\varphi_i}^+$ and $\varphi_i^-$ such that 
\begin{equation}\label{v1}\tag{$\varPhi_1$}
    1<\varphi_i^-\leqslant\dfrac{t\varphi^i_{x,y}(t)}{\varPhi^i_{x,y}(t)}\leqslant \varphi_i^+<+\infty\text{ for all } (x,y)\in\overline{\Omega}\times\overline{\Omega}~~\text{ and all } t\geqslant 0. \end{equation}
    This relation implies  that
    \begin{equation}\label{A2}
        1<\varphi_i^-\leqslant \dfrac{t\widehat{\varphi}^i_{x}(t)}{\widehat{\varPhi}^i_{x}(t)}\leqslant\varphi_i^+<+\infty,\text{ for all } x\in\overline{\Omega}~~\text{ and all } t\geqslant 0.\end{equation}
             It follows that  $\varPhi^i_{x,y}$ and $\widehat{\varPhi}^i_{x}$ satisfy the global $\Delta_2$-condition (see \cite{ra}), written $\varPhi^i_{x,y}\in \Delta_2$ and $\widehat{\varPhi}^i_{x}\in \Delta_2$, that is,
    \begin{equation}\label{r1}
    \varPhi^i_{x,y}(2t)\leqslant K_1\varPhi^i_{x,y}(t)~~ \text{ for all } (x,y)\in\overline{\Omega}\times\overline{\Omega},~~\text{ and  all } t\geqslant 0,
    \end{equation} and
    \begin{equation}\label{rr1}
        \widehat{\varPhi}^i_{x}(2t)\leqslant K_2\widehat{\varPhi}^i_{x}(t) ~~\text{ for any } x\in\overline{\Omega},~~\text{ and  all } t\geqslant 0,
        \end{equation}
 where $K_1$ and $K_2$ are two positive constants. 
 
 Furthermore, for $i=1,2$, we assume that $\varPhi^i_{x,y}$ satisfies the following condition
  \begin{equation}\label{f2.}\tag{$\varPhi_2$}
  \text{ the function } [0, \infty) \ni t\mapsto \varPhi^i_{x,y}(\sqrt{t}) \text{ is convex. }
  \end{equation}
 

   Now, due to the nonlocality of the operators $(-\Delta)^{s_i(x,y)}_{a_{(x,\cdot)}}$ $(i = 1, 2)$,  we  define the new $s(x,y)$-fractional Musielak-Sobolev spaces as introduce in \cite{srati} as follows 
    \begingroup\makeatletter\def\f@size{9}\check@mathfonts$$ W^{s_i(x,y)}{L_{\varPhi^i_{x,y}}}(\Omega)=\Bigg\{u\in L_{\widehat{\varPhi^i}_x}(\Omega) :  \int_{\Omega} \int_{\Omega} \varPhi^i_{x,y}\left( \dfrac{\lambda| u(x)- u(y)|}{|x-y|^{s_i(x,y)}}\right) \dfrac{dxdy}{|x-y|^N}< \infty \text{ for some } \lambda >0 \Bigg\}.
$$\endgroup
This space can be equipped with the norm
\begin{equation}\label{r2}
||u||_{s_i(x,y),\varPhi^i_{x,y}}=||u||_{\widehat{\varPhi}^i_x}+[u]_{s_i(x,y),\varPhi^i_{x,y}},
\end{equation}
where $[\cdot]_{s_i(x,y),\varPhi^i_{x,y}}$ is the Gagliardo seminorm defined by 
$$[u]_{s_i(x,y),\varPhi^i_{x,y}}=\inf \Bigg\{\lambda >0 :  \int_{\Omega} \int_{\Omega} \varPhi^i_{x,y}\left( \dfrac{|u(x)- u(y)|}{\lambda|x-y|^{s_i(x,y)}}\right) \dfrac{dxdy}{|x-y|^N}\leqslant 1 \Bigg\}.
$$

\begin{thm}$($\cite{srati}$)$.
      Let $\Omega$ be an open subset of $\R^N$. For $i=1,2$, the spaces $W^{s_i(x,y)}L_{\varPhi^i_{x,y}}(\Omega)$ $(i = 1, 2)$ are a Banach spaces with respect to the norm $(\ref{r2})$, and a  separable $($resp. reflexive$)$ spaces if and only if $\varPhi^i_{x,y} \in \Delta_2$ $($resp. $\varPhi^i_{x,y}\in \Delta_2 $ and $\overline{\varPhi^i}_{x,y}\in \Delta_2$$)$. Furthermore,
if   $\varPhi^i_{x,y} \in \Delta_2$ and $\varPhi^i_{x,y}(\sqrt{t})$ is convex, then  the spaces $W^{s_i(x,y)}L_{\varPhi^i_{x,y}}(\Omega)$ are an uniformly convex space.
      \end{thm}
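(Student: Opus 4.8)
The plan is to reduce every assertion to known structural facts about Musielak--Orlicz spaces by realizing $W^{s_i(x,y)}L_{\varPhi^i_{x,y}}(\Omega)$ as a \emph{closed subspace} of a product of two such spaces. Set $d\mu=\frac{dxdy}{|x-y|^N}$ and introduce the product space $X:=L_{\widehat{\varPhi}^i_x}(\Omega)\times L_{\varPhi^i_{x,y}}(\Omega\times\Omega,d\mu)$, the second factor being the Musielak--Orlicz space over the product measure space, and equip $X$ with the sum of the two Luxemburg norms. The linear map $Tu=(u,D_s u)$, where $D_s u(x,y)=\frac{u(x)-u(y)}{|x-y|^{s_i(x,y)}}$, is then an \emph{isometry} of $W^{s_i(x,y)}L_{\varPhi^i_{x,y}}(\Omega)$ onto $T(W^{s_i(x,y)}L_{\varPhi^i_{x,y}}(\Omega))\subset X$, directly by the definition of the norm $(\ref{r2})$ and of the Gagliardo seminorm.

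Next I would show $T(W^{s_i(x,y)}L_{\varPhi^i_{x,y}}(\Omega))$ is closed in $X$. If $(u_n,D_s u_n)\to(u,v)$ in $X$, then $u_n\to u$ in $L_{\widehat{\varPhi}^i_x}(\Omega)$, and the $\Delta_2$-condition $(\ref{rr1})$ turns norm convergence into modular convergence, so $u_n\to u$ a.e.\ on $\Omega$ along a subsequence; hence $D_s u_n\to D_s u$ a.e.\ on $\Omega\times\Omega$. Since also $D_s u_n\to v$ in $L_{\varPhi^i_{x,y}}(\Omega\times\Omega,d\mu)$, a further subsequence converges a.e., forcing $v=D_s u$ and $(u,v)\in T(W)$. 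As $X$ is Banach, the closed subspace $T(W)$ is Banach, and therefore so is $W^{s_i(x,y)}L_{\varPhi^i_{x,y}}(\Omega)$.

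For separability and reflexivity I would invoke the classical characterizations: $L_{\varPhi}$ is separable iff $\varPhi\in\Delta_2$, and reflexive iff $\varPhi,\overline{\varPhi}\in\Delta_2$. For the sufficiency directions, $\varPhi^i_{x,y}\in\Delta_2$ forces $\widehat{\varPhi}^i_x\in\Delta_2$ by $(\ref{A2})$--$(\ref{rr1})$, so both factors of $X$ are separable; hence $X$ and its closed subspace $T(W)\cong W$ are separable. Likewise $\varPhi^i_{x,y},\overline{\varPhi^i}_{x,y}\in\Delta_2$ makes both factors, hence $X$, reflexive, and reflexivity passes to the closed subspace $T(W)$. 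The converse implications are the standard necessity results: failure of the relevant $\Delta_2$-condition produces, inside $L_{\varPhi^i_{x,y}}(\Omega\times\Omega,d\mu)$, either an uncountable $1$-separated family (obstructing separability) or a copy of $\ell^1$ or $c_0$ (obstructing reflexivity); I would carry this out by exhibiting such families \emph{within the image} $T(W)$ through a suitable choice of functions $u$, which is the only point in these converses requiring genuine care.

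Finally, uniform convexity is the delicate case and cannot be obtained by abstraction from $X$, since $X$ carries an $\ell^1$-type sum norm and $\ell^1$-direct sums are never uniformly convex; a direct modular argument is needed. The hypothesis $(\ref{f2.})$ that $t\mapsto\varPhi^i_{x,y}(\sqrt t)$ is convex (which also gives convexity of $t\mapsto\widehat{\varPhi}^i_x(\sqrt t)$, taking $y=x$) yields, combining convexity with the superadditivity of $G(t)=\varPhi^i_{x,y}(\sqrt t)$ coming from $G(0)=0$, the pointwise Clarkson-type inequality
$$\varPhi^i_{x,y}\!\left(\frac{a+b}{2}\right)+\varPhi^i_{x,y}\!\left(\frac{a-b}{2}\right)\le \frac{1}{2}\varPhi^i_{x,y}(a)+\frac{1}{2}\varPhi^i_{x,y}(b).$$
Integrating this against $dx$ for the $\widehat{\varPhi}^i_x$-part and against $d\mu$ with $a=D_s u$, $b=D_s v$ for the Gagliardo part (so that, by linearity of $D_s$, the arguments $\frac{a\pm b}{2}$ are exactly $D_s(\frac{u\pm v}{2})$) produces a uniform-convexity estimate for the total modular $\rho(u)=\int_\Omega\widehat{\varPhi}^i_x(|u|)\,dx+\iint \varPhi^i_{x,y}(|D_s u|)\,d\mu$. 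Using the $\Delta_2$-conditions $(\ref{r1})$--$(\ref{rr1})$ to pass from modular smallness to norm smallness then upgrades this to uniform convexity of the equivalent Luxemburg norm attached to $\rho$, hence of the space. The main obstacle lies precisely in this last transfer: bounding $\|u-v\|$ uniformly on the unit sphere in terms of the modular defect $\frac12\rho(u)+\frac12\rho(v)-\rho(\frac{u+v}{2})$ is where the quantitative $\Delta_2$ estimates must be used carefully.
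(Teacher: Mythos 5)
The paper never proves this theorem: it is imported verbatim from \cite{srati}, and the start of Section \ref{S2} explicitly defers the proofs of that section's results to the cited works. So there is no in-paper argument to compare yours against line by line. That said, your strategy is exactly the one this paper itself deploys in Section \ref{S3} for the analogous space $X_i$: your isometry $u \mapsto (u, D^{s_i(x,y)}u)$ into a product of Musielak spaces is literally the map $T$ used there to obtain reflexivity of $X_i$, your a.e.-convergence argument for closedness is how completeness of $X_i$ is proved there, and your pointwise Clarkson-type inequality (convexity plus superadditivity of $G(t)=\varPhi^i_{x,y}(\sqrt{t})$) is the standard mechanism behind the uniform convexity claim in the cited sources. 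Those parts of your proposal --- completeness, and the sufficiency of $\Delta_2$ (resp. $\Delta_2$ for $\varPhi^i_{x,y}$ and $\overline{\varPhi^i}_{x,y}$) for separability (resp. reflexivity) --- are correct. One small slip: you do not need $\Delta_2$ to pass from norm convergence to modular convergence; that implication always holds by convexity of the modular (if $\|w_n\|<1$ then $\rho(w_n)\leqslant \|w_n\|\,\rho(w_n/\|w_n\|)\leqslant \|w_n\|$), and it is the reverse implication that requires $\Delta_2$.

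Two genuine gaps remain. First, the theorem is an equivalence, and your treatment of the ``only if'' halves is a declaration rather than a proof: you say you would exhibit uncountable separated families or copies of $c_0$ or $\ell^1$ \emph{within the image} $T(W)$, but that is precisely where all the difficulty sits. Elements of $T(W)$ have the constrained form $(u, D_s u)$ --- the second coordinate is forced to be a difference quotient of the first --- so the classical constructions witnessing failure of separability or reflexivity in $L_{\varPhi}$ when $\Delta_2$ fails (which use essentially arbitrary disjointly supported functions) cannot be transplanted; one must manufacture functions $u$ on $\Omega$ whose quotients $\frac{u(x)-u(y)}{|x-y|^{s_i(x,y)}}$ concentrate at the points and scales where $\varPhi^i_{x,y}$ degenerates, and in the Musielak setting this failure need not even be uniform in $(x,y)$. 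As written, the necessity directions are unproved. Second, your uniform-convexity argument establishes uniform convexity of the Luxemburg norm of the combined modular $\rho$, not of the norm $(\ref{r2})$, which is an $\ell^1$-type sum $\|u\|_{\widehat{\varPhi}^i_x}+[u]_{s_i(x,y),\varPhi^i_{x,y}}$. These two norms are equivalent, but uniform convexity is not preserved under equivalent renorming --- a point your own remark about $\ell^1$-sums makes plain --- so your concluding ``hence of the space'' is only valid if the theorem is read with respect to the modular norm (which is indeed how the cited literature reads it). The quantitative last step itself (modular defect to norm defect) does go through using comparisons of the type in Proposition \ref{Nmod}, so this second issue is one of statement rather than substance, but it should be said explicitly.
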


           \begin{defini}$($\cite{benkirane}$)$.
           We say that $\varPhi^i_{x,y}$ $(i=1,2)$ satisfies the fractional boundedness condition, written $\varPhi^i_{x,y}\in \mathcal{B}_{f}$, if
         \begin{equation}\tag{$\varPhi_3$}
        \label{v3}         
           \sup\limits_{(x,y)\in \overline{\Omega}\times\overline{\Omega}}\varPhi^i_{x,y}(1)<\infty.  \end{equation}
           \end{defini}
           \begin{thm}  $($\cite{benkirane,srati}$)$.    \label{TT}
                       Let $\Omega$ be an open subset of $\R^N$. Assume that  $\varPhi^i_{x,y}\in \mathcal{B}_{f}$ $(i=1,2)$. 
                       Then,
                       $$C^2_0(\Omega)\subset W^{s_i(x,y)}L_{\varPhi^i_{x,y}}(\Omega).$$
                  \end{thm}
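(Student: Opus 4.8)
The plan is to verify directly that every $u\in C^2_0(\Omega)$ meets the two defining requirements of $W^{s_i(x,y)}L_{\varPhi^i_{x,y}}(\Omega)$: that $u\in L_{\widehat{\varPhi^i}_x}(\Omega)$, and that the Gagliardo double integral is finite for some $\lambda>0$. Since $u$ is $C^2$ with compact support $K\subset\Omega$, it is bounded with $\|u\|_\infty<\infty$ and Lipschitz with constant $L:=\|\nabla u\|_\infty$, so $|u(x)-u(y)|\leqslant L|x-y|$; moreover the integrand vanishes whenever $\{x,y\}\cap K=\emptyset$. The engine of every estimate will be the elementary consequence of \eqref{v1}: whenever the argument $t$ lies in $[0,1]$ one has $\varPhi^i_{x,y}(t)\leqslant t^{\varphi_i^-}\varPhi^i_{x,y}(1)$, and by the fractional boundedness condition \eqref{v3} the factor $\varPhi^i_{x,y}(1)$ is dominated by $M:=\sup_{(x,y)}\varPhi^i_{x,y}(1)<\infty$. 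The same bound holds for $\widehat{\varPhi^i}_x$ through \eqref{A2}, using that $\widehat{\varPhi^i}_x=\varPhi^i_{x,x}$, so that $\widehat{\varPhi^i}_x(1)\leqslant M$ as well.

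For the membership $u\in L_{\widehat{\varPhi^i}_x}(\Omega)$ I would choose $\lambda$ with $\lambda\|u\|_\infty\leqslant 1$; then $\widehat{\varPhi^i}_x(\lambda|u(x)|)\leqslant M(\lambda\|u\|_\infty)^{\varphi_i^-}$ on $K$ and vanishes off $K$, so the integral over $\Omega$ is at most $M(\lambda\|u\|_\infty)^{\varphi_i^-}|K|<\infty$.

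For the double integral I would split $\Omega\times\Omega$ into the near-diagonal region $\{|x-y|<1\}$ and the far region $\{|x-y|\geqslant 1\}$, fixing $\lambda\leqslant\min\{1/L,\,1/(2\|u\|_\infty)\}$ so that in both regions the argument of $\varPhi^i_{x,y}$ stays in $[0,1]$. On $\{|x-y|<1\}$ the Lipschitz bound gives argument $\leqslant\lambda L|x-y|^{1-s_i(x,y)}$, whence the integrand is $\leqslant M(\lambda L)^{\varphi_i^-}|x-y|^{(1-s_i(x,y))\varphi_i^- - N}$; since $1-s_i(x,y)\geqslant 1-s_i^+>0$ and $|x-y|<1$, this is dominated by $M(\lambda L)^{\varphi_i^-}|x-y|^{(1-s_i^+)\varphi_i^- - N}$, integrable near the diagonal because in polar coordinates about $x$ the resulting exponent $(1-s_i^+)\varphi_i^- - 1>-1$. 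On $\{|x-y|\geqslant 1\}$, using $|x-y|^{s_i(x,y)}\geqslant 1$ and $|u(x)-u(y)|\leqslant 2\|u\|_\infty$, the integrand is $\leqslant M(2\lambda\|u\|_\infty)^{\varphi_i^-}|x-y|^{-s_i(x,y)\varphi_i^- - N}\leqslant M(2\lambda\|u\|_\infty)^{\varphi_i^-}|x-y|^{-s_i^-\varphi_i^- - N}$, integrable at infinity since $s_i^-\varphi_i^->0$. In each region the vanishing of the integrand off a neighbourhood of the compact set $K$ confines the remaining variable to a set of finite measure, so both contributions are finite. Summing the two regions yields finiteness of the seminorm integral for this $\lambda$, which together with the previous paragraph gives $u\in W^{s_i(x,y)}L_{\varPhi^i_{x,y}}(\Omega)$.

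The step I expect to be the main obstacle is making the integrability estimates uniform in the variable order $s_i(\cdot,\cdot)$: one must everywhere replace $s_i(x,y)$ by the correct extreme, namely $s_i^+$ near the diagonal and $s_i^-$ at infinity, so that a single convergent power of $|x-y|$ controls an entire region, and one must exploit the compact support of $u$ to reduce the unbounded domain $\Omega\times\Omega$ to an effectively bounded one. The role of \eqref{v3} is precisely to let the $(x,y)$-dependent factor $\varPhi^i_{x,y}(1)$ be replaced by the single constant $M$, without which no uniform bound of this kind would be available.
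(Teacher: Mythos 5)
Your proof is correct, and since the paper states Theorem \ref{TT} without proof (quoting it from \cite{benkirane,srati}), there is nothing in the text for it to diverge from: your argument — splitting the Gagliardo integral into $|x-y|<1$ and $|x-y|\geqslant 1$, using the Lipschitz bound with the extreme exponent $s_i^+$ near the diagonal and boundedness of $u$ with $s_i^-$ at infinity, exploiting the compact support to keep one variable in a set of finite measure, and combining the power estimate $\varPhi^i_{x,y}(t)\leqslant t^{\varphi_i^-}\varPhi^i_{x,y}(1)$ from \eqref{v1} with the uniform bound $\sup_{(x,y)}\varPhi^i_{x,y}(1)<\infty$ from \eqref{v3} — is precisely the standard argument of the cited references. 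I find no gaps; in particular you correctly identified that $\mathcal{B}_f$ is exactly what makes the $(x,y)$-dependent constant uniform, and your choice $\lambda\leqslant\min\{1/L,\,1/(2\|u\|_\infty)\}$ keeps the argument of $\varPhi^i_{x,y}$ in $[0,1]$ on both regions, which is what the power estimate requires.
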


   
               
   For $i=1,2$  we denote by $\widehat{\varPhi^i}_{x}^{-1}$ the inverse function of $\widehat{\varPhi^i}_{x}$  which satisfies the following conditions:
           \begin{equation}\label{15}
           \int_{0}^{1} \dfrac{\widehat{\varPhi^i}_{x}^{-1}(\tau)}{\tau^{\frac{N+s_i^-}{N}}}d\tau<\infty~~ \text{ for all } x\in \overline{\Omega} \text{ and } ~~i=1,2,
           \end{equation}
           
           \begin{equation}\label{16n}
           \int_{1}^{\infty} \dfrac{\widehat{\varPhi^i}_{x}^{-1}(\tau)}{\tau^{\frac{N+s_i^-}{N}}}d\tau=\infty ~~\text{ for all }x\in \overline{\Omega} \text{ and } ~~i=1,2.
           \end{equation}
           If (\ref{16n}) is satisfied, we define the inverse  Musielak conjugate function of $\widehat{\varPhi}_x$ as follows
           \begin{equation}\label{17}
           (\widehat{\varPhi}^*_{i})^{-1}(t)=\int_{0}^{t}\dfrac{\widehat{\varPhi^i}_{x}^{-1}(\tau)}{\tau^{\frac{N+s_i^-}{N}}}d\tau ~~\text{ for } ~~i=1,2.
           \end{equation}
            \begin{thm}\cite{srati}\label{th2.}
          Let $\Omega$  be a bounded open
           subset of  $\R^N$ with $C^{0,1}$-regularity 
             and bounded boundary. If $(\ref{15})$ and  $(\ref{16n})$  hold, then 
          \begin{equation}\label{18}
           W^{s_i(x,y)}{L_{\varPhi^i_{x,y}}}(\Omega)\hookrightarrow L_ {\widehat{\varPhi}^*_{i}}(\Omega)~~\text{ for } i=1,2.
          \end{equation}
         Moreover, the embedding
                    \begin{equation}\label{27}
                     W^{s_i(x,y)}{L_{\varPhi^i_{x,y}}}(\Omega)\hookrightarrow L_{B_x}(\Omega)~~\text{ for } i=1,2,
                    \end{equation}
                    is compact for all $B_x\prec\prec \widehat{\varPhi}^*_{i}$ $~~\text{ for } ~~i=1,2.$
                    \end{thm}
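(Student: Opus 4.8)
The plan is to treat the two embeddings separately, proving first the continuous embedding (\ref{18}) and then deducing the compact embedding (\ref{27}) from it. Since the statement is indexed by $i=1,2$ but the argument is identical for each index, I fix $i$ and suppress it where convenient. The whole argument rests on the integrability conditions (\ref{15})--(\ref{16n}): these are precisely what guarantees that the inverse Musielak conjugate $(\widehat{\varPhi}^*_i)^{-1}$ defined in (\ref{17}) is finite and well behaved, and that the associated target function $\widehat{\varPhi}^*_i$ has the correct growth for the embedding to hold.

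For the continuous embedding I would first reduce to nonnegative functions by truncation and then to a dense class of regular functions, which is legitimate because $\varPhi^i_{x,y}\in\Delta_2$ ensures the density of such functions in $W^{s_i(x,y)}L_{\varPhi^i_{x,y}}(\Omega)$. For such $u$ I would pass to the decreasing rearrangement $u^*$ and establish a fractional P\'olya--Szeg\H{o}-type inequality: the Gagliardo modular of $u^*$ computed with the fixed exponent $s_i^-$ is controlled by the Gagliardo modular of $u$ with the variable order $s_i(x,y)$. Replacing $s_i(x,y)$ by its infimum $s_i^-$ and dominating the family $\{\varPhi^i_{x,y}\}$ by a single profile is what makes the position-dependent kernel amenable to rearrangement. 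From this one derives a pointwise bound for $u^*$ in terms of an integral of $\widehat{\varPhi^i}_{x}^{-1}$ against the weight $\tau^{-(N+s_i^-)/N}$, which is exactly the kernel appearing in (\ref{17}). Inserting this estimate into the modular of $u$ measured in $L_{\widehat{\varPhi}^*_i}(\Omega)$ yields $\int_\Omega \widehat{\varPhi}^*_i(|u|/C)\,dx\leqslant 1$ whenever $\|u\|_{s_i(x,y),\varPhi^i_{x,y}}\leqslant 1$, which is the claim (\ref{18}).

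For the compact embedding I would start from the continuous one just proved. Let $(u_n)$ be bounded in $W^{s_i(x,y)}L_{\varPhi^i_{x,y}}(\Omega)$; by (\ref{18}) it is bounded in $L_{\widehat{\varPhi}^*_i}(\Omega)$. A Riesz--Fr\'echet--Kolmogorov argument, using finiteness of the Gagliardo modular to control translations and boundedness of $\Omega$ to control tails, shows that $(u_n)$ is relatively compact in $L^1(\Omega)$, hence converges almost everywhere along a subsequence. The crucial step is to upgrade this to convergence in $L_{B_x}(\Omega)$: since $B_x\prec\prec\widehat{\varPhi}^*_i$, one has $B_x(kt)/\widehat{\varPhi}^*_i(t)\to 0$ as $t\to\infty$ for every $k>0$, so the uniform bound in $L_{\widehat{\varPhi}^*_i}(\Omega)$ forces the modulars $\int_\Omega B_x(|u_n|/\lambda)\,dx$ to be uniformly integrable. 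Combining almost-everywhere convergence with this equi-integrability through a Vitali-type convergence theorem gives strong convergence in $L_{B_x}(\Omega)$, establishing (\ref{27}).

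I expect the main obstacle to be the fractional P\'olya--Szeg\H{o} step inside the continuous embedding. The difficulty is that both the order $s_i(x,y)$ and the Musielak function $\varPhi^i_{x,y}$ depend on the pair $(x,y)$, so the Gagliardo modular is not invariant under rearrangement in any direct way. Carrying out the reduction to $s_i^-$ and to a single dominating Young function, while verifying that these simplifications do not destroy the sharp integrability balance encoded in (\ref{15})--(\ref{16n}), is the genuinely delicate part; once this modular comparison is in place, the rest follows from routine use of the $\Delta_2$-condition and the definition of the Luxemburg norm.
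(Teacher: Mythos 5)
First, a point of reference: this paper does not prove Theorem \ref{th2.} at all --- it is quoted verbatim from \cite{srati} (with precursors in \cite{benkirane,benkirane2}), so there is no internal proof to compare yours against; what follows assesses your argument on its own terms. Your plan for the compact embedding (\ref{27}) is the standard route and is fine in outline (boundedness in $L_{\widehat{\varPhi}^*_{i}}(\Omega)$ from (\ref{18}), relative compactness in $L^1(\Omega)$ and a.e.\ convergence along a subsequence, then a Vitali argument using $B_x\prec\prec\widehat{\varPhi}^*_{i}$ to get uniform integrability of the $B_x$-modular). The genuine gap is in your proof of the continuous embedding (\ref{18}), and it is structural: the target $L_{\widehat{\varPhi}^*_{i}}(\Omega)$ is a \emph{Musielak} space, whose generating function depends on the point $x$ through $\widehat{\varPhi}^i_x=\varPhi^i_{x,x}$ in (\ref{17}), so it is not a rearrangement-invariant space. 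No estimate on the decreasing rearrangement $u^*$ alone can control $\int_\Omega\widehat{\varPhi}^*_{i}\left(|u(x)|/C\right)dx$: equimeasurability of $u$ and $u^*$ says nothing about a modular whose integrand depends on $x$ as well as on $|u(x)|$. Concretely, your reduction step --- replacing $s_i(x,y)$ by $s_i^-$ and dominating the family $\{\varPhi^i_{x,y}\}$ from below by a single position-independent Young function $\Phi_0$ (which is forced if P\'olya--Szeg\H{o} is to apply) --- yields, after the Donaldson--Trudinger pointwise bound, only $u\in L_{\Phi_0^*}(\Omega)$. But taking $y=x$ in $\Phi_0\leqslant\varPhi^i_{x,y}$ gives $\Phi_0\leqslant\widehat{\varPhi}^i_x$, hence $\Phi_0^{-1}\geqslant(\widehat{\varPhi}^i_x)^{-1}$ and, integrating against $\tau^{-(N+s_i^-)/N}$ as in (\ref{17}), $\Phi_0^*\leqslant\widehat{\varPhi}^*_{i}$. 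So you prove an embedding into a strictly \emph{larger} (weaker) space than claimed, and the compactness statement --- asserted for every $B_x\prec\prec\widehat{\varPhi}^*_{i}$, not merely for $B_x\prec\prec\Phi_0^*$ --- would not follow either. Your intermediate claim of a pointwise bound for $u^*$ ``in terms of an integral of $\widehat{\varPhi^i}_x^{-1}$'' is not even meaningful: after rearrangement there is no $x$ left to which $\widehat{\varPhi}^i_x$ could refer.

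This is precisely why Sobolev-type embeddings for Musielak (and variable-exponent) spaces are never proved by symmetrization: one must work directly with the position-dependent modular, via potential or covering estimates that keep track of $x$ throughout, as is done in \cite{benkirane,benkirane2,srati}. Your acknowledgment that the $(x,y)$-dependence is ``the genuinely delicate part'' identifies the right obstacle, but the proposed remedy (reduction to a single profile) is exactly the step that destroys the conclusion rather than a technicality to be verified. The second half of your argument can be kept as is, conditional on (\ref{18}); the first half needs to be replaced by an argument that never discards the $x$-dependence of $\varPhi^i_{x,y}$ and $\widehat{\varPhi}^i_x$.
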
           
 
        Finally, the proof of our existence result is based on the following Ekeland's variational principle theorem and direct variational approach.
   \begin{thm}\label{th1}(\cite{ek})
   Let V be a complete metric space and $F : V \longrightarrow \R\cup \left\lbrace +\infty\right\rbrace$ be a lower semicontinuous functional on $V$, that is bounded below and not identically equal to $+\infty$. Fix $\varepsilon>0$ and a  point $u\in V$ 
     such that
    $$F(u)\leqslant \varepsilon +\inf\limits_{x\in V}F(x).$$ Then for every $\gamma > 0$,
     there exists some point $v\in V$ such that :
     $$F(v)\leqslant F(u),$$
     $$d(u,v)\leqslant \gamma$$
     and for all $w\neq v$
     $$F(w)> F(v)-\dfrac{\varepsilon}{\gamma}d(v,w).$$
   \end{thm}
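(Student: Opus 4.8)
The plan is to introduce the parameter $\lambda := \varepsilon/\gamma$ and to realize the desired point $v$ as a \emph{minimal element} for a suitable partial order on $V$. Concretely, I would define the relation
$$ z \preceq w \iff F(z) + \lambda\, d(z,w) \leqslant F(w), $$
and check that it is reflexive, antisymmetric and transitive, so that it is a genuine partial order (antisymmetry and transitivity both use the triangle inequality for $d$). The key observation is that a point $v$ that is minimal for $\preceq$ — meaning there is no $z \neq v$ with $z \preceq v$ — is precisely a point for which $F(w) > F(v) - \lambda\, d(v,w)$ holds for every $w \neq v$, which is exactly the third conclusion of the theorem. Thus the whole problem reduces to producing a minimal element that is moreover $\preceq$-below $u$.

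To produce such an element I would build an iterative sequence. Set $u_0 := u$. Given $u_n$, consider the sublevel set
$$ S_n := \{ z \in V : F(z) + \lambda\, d(z,u_n) \leqslant F(u_n) \} = \{ z \in V : z \preceq u_n \}. $$
Since $F$ is lower semicontinuous and $z \mapsto d(z,u_n)$ is continuous, the map $z \mapsto F(z) + \lambda\, d(z,u_n)$ is lower semicontinuous, so each $S_n$ is closed and nonempty (it contains $u_n$). Writing $m_n := \inf_{S_n} F$, which is finite because $F$ is bounded below, I would then select $u_{n+1} \in S_n$ with the controlled property
$$ F(u_{n+1}) \leqslant m_n + 2^{-n}. $$
Transitivity of $\preceq$ gives the nesting $S_{n+1}\subseteq S_n$, so the $S_n$ form a decreasing chain of closed sets.

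Next I would establish convergence. Because $u_{n+1}\in S_n$, we have $\lambda\, d(u_{n+1},u_n) \leqslant F(u_n)-F(u_{n+1})$; hence $(F(u_n))_n$ is non-increasing and bounded below, so convergent, and the telescoping estimate $\sum_n \lambda\, d(u_{n+1},u_n) \leqslant F(u_0) - \lim_n F(u_n) < \infty$ shows that $(u_n)_n$ is Cauchy. Completeness of $V$ yields a limit $v$. Since $u_m \in S_n$ for every $m\geqslant n$ and $S_n$ is closed, letting $m\to\infty$ gives $v \in \bigcap_n S_n$; in particular $v \preceq u_0 = u$. This already delivers the first two conclusions: $F(v)\leqslant F(u)$, and $\lambda\, d(v,u)\leqslant F(u)-F(v)\leqslant F(u)-\inf_V F \leqslant \varepsilon$, so that $d(v,u)\leqslant \varepsilon/\lambda = \gamma$.

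The main obstacle is the verification of minimality of $v$, and this is exactly where the controlled choice $F(u_{n+1})\leqslant m_n + 2^{-n}$ is needed. Suppose $w \preceq v$. Since $v \preceq u_n$, transitivity gives $w \in S_n$ for all $n$, whence $F(w)\geqslant m_n \geqslant F(u_{n+1}) - 2^{-n}$; letting $n\to\infty$ and writing $\ell := \lim_n F(u_n)$ yields $F(w)\geqslant \ell$. On the other hand, lower semicontinuity along $u_n \to v$ gives $F(v)\leqslant \liminf_n F(u_n) = \ell \leqslant F(w)$. Combining this with the defining inequality $F(w)+\lambda\,d(w,v)\leqslant F(v)$ forces $\lambda\, d(w,v) \leqslant F(v)-F(w)\leqslant 0$, so $w=v$. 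Hence $v$ is minimal and the third conclusion follows. The delicate interplay is between the selection pinning $F(u_{n+1})$ to within $2^{-n}$ of $m_n$, the nestedness of the $S_n$, and lower semicontinuity: together they identify $F(v)$ with the common limiting value $\ell$ and rule out any strictly smaller competitor $w$.
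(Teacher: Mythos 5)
Your proof is correct, and it is essentially the classical argument for Ekeland's variational principle: the partial order $z \preceq w \iff F(z) + (\varepsilon/\gamma)\, d(z,w) \leqslant F(w)$, the iterative near-minimizing selection $F(u_{n+1}) \leqslant \inf_{S_n} F + 2^{-n}$, and the identification of the limit of the resulting Cauchy sequence as a $\preceq$-minimal element below $u$. The paper itself offers no proof of this statement — it is quoted as a known theorem from Ekeland's 1974 article \cite{ek} — so there is nothing internal to compare against; your argument is the one found in that source and in standard references. Two cosmetic remarks: antisymmetry of $\preceq$ uses only the symmetry and positive definiteness of $d$ together with cancellation of finite values of $F$ (the triangle inequality enters only in transitivity), and antisymmetry can in principle fail at pairs of points where $F=+\infty$; this is harmless here because every point entering your chain satisfies $F \leqslant F(u) \leqslant \varepsilon + \inf_V F < \infty$.
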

   
    \begin{thm}\label{th2} (\cite{110})
        Suppose that $Y$ is a reflexive Banach space with norm $||.||$ and let
        $V\subset Y$ be a weakly closed subset of $Y$. Suppose $E : V \longrightarrow \R \cup \left\lbrace +\infty\right\rbrace $ is coercive
        and (sequentially) weakly lower semi-continuous on $V$ with respect to $Y$, that
        is, suppose the following conditions are fulfilled:
     \begin{itemize}
        \item[$\bullet$] $E(u)\rightarrow \infty$ as $||u||\rightarrow \infty$, $u\in V$.
        
        \item[$\bullet$]  For any $u\in V$, any sequence $\left\lbrace u_n\right\rbrace $ in $V$ such that $u_n\rightharpoonup u$ weakly in $X$
        there holds:
       $$E(u)\leqslant \liminf_{n\rightarrow \infty}E(u_n).$$
         \end{itemize} 
        Then $E$ is bounded from below on $V$ and attains its infimum  in $V$.     
         \end{thm}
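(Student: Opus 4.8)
The plan is to apply the classical direct method of the calculus of variations, whose three ingredients---coercivity, reflexivity, and weak lower semicontinuity---correspond exactly to the three hypotheses in the statement. First I would set $m=\inf_{u\in V}E(u)$ and dispose of the trivial case $m=+\infty$ (which means $E\equiv+\infty$ on $V$, so there is nothing to prove). Assuming then $m<+\infty$, I would select a minimizing sequence $\{u_n\}\subset V$ with $E(u_n)\to m$.

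The first real step is to show that $\{u_n\}$ is bounded in $Y$. Since $E(u_n)\to m<+\infty$, the values $E(u_n)$ are bounded from above for $n$ large; if $\|u_n\|\to\infty$ held along some subsequence, the coercivity hypothesis would force $E(u_n)\to+\infty$ along that subsequence, contradicting the upper bound. Hence $\sup_n\|u_n\|<\infty$. This is precisely where the coercivity assumption $E(u)\to\infty$ as $\|u\|\to\infty$ is consumed.

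The key step---and the one I expect to be the crux---is to pass to a weak limit while remaining inside $V$. Since $Y$ is reflexive, bounded sequences are relatively weakly sequentially compact (Kakutani together with the Eberlein--\v{S}mulian theorem), so there is a subsequence $u_{n_k}\rightharpoonup u$ weakly in $Y$ for some $u\in Y$. Because $V$ is weakly closed and each $u_{n_k}\in V$, the weak limit satisfies $u\in V$. Without reflexivity one could not in general extract a weakly convergent subsequence from a merely bounded sequence, and without the weak closedness of $V$ the candidate minimizer $u$ could escape the admissible set; these are exactly the two structural hypotheses that make the method work.

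Finally I would close the argument using the sequential weak lower semicontinuity: applying the second hypothesis to $u_{n_k}\rightharpoonup u$ yields
$$E(u)\leqslant \liminf_{k\to\infty}E(u_{n_k})=m.$$
On the other hand, $u\in V$ gives $E(u)\geqslant m$ directly from the definition of the infimum, so $E(u)=m$. Since $E$ takes values in $\R\cup\{+\infty\}$, the equality $E(u)=m$ rules out $m=-\infty$; thus $E$ is bounded from below on $V$ and attains its infimum at the point $u\in V$, as claimed.
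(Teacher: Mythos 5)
Your proof is correct: it is the standard direct-method argument (coercivity gives boundedness of a minimizing sequence, reflexivity plus Eberlein--\v{S}mulian gives a weakly convergent subsequence, weak closedness keeps the limit in $V$, and weak lower semicontinuity closes the argument, with the value $m=-\infty$ excluded since $E$ never takes that value). The paper itself offers no proof of this theorem---it is imported as a tool from the cited reference of Struwe---and your argument coincides with the canonical proof given there, so there is nothing to compare beyond noting full agreement.
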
        
\section{Some qualitative properties of  $\mathcal{N}^{s(x,\cdot)}_{a(x,\cdot)}$}\label{S3}
The aim of this section is to give the basic properties of the fractional $a^i(x,y)$-Laplacian with the associated $a^i(x,y)$-Neumann boundary
condition.\\
          
Let $u : \R^N \longrightarrow \R$ be a measurable function, for $i=1,2$ we set
$$\|u\|_{X_i}=[u]_{s_i(x,y),\varPhi^i_{x,y}}+\|u\|_{\widehat{\varPhi}^i_x}+\|u\|_{\widehat{\varPhi}^i_x,\beta,C\Omega}$$ 
    where
{\small$$[u]_{s_i(x,y),\varPhi^i_{x,y}}=\inf \Bigg\{\lambda >0 :  \int_{\R^{2N}\setminus (C\Omega)^2} \varPhi^i_{x,y}\left( \dfrac{|u(x)- u(y)|}{\lambda|x-y|^{s_i(x,y)}}\right) \dfrac{dxdy}{|x-y|^N}\leqslant 1 \Bigg\}
$$ }   
and
$$\|u\|_{\widehat{\varPhi}^i_x,\beta,C\Omega}=\inf\left\lbrace \lambda>0 \text{ : }\int_{C\Omega}\beta(x)\widehat{\varPhi}^i_x\left( \dfrac{|u(x)|}{\lambda}\right) dx\leqslant 1\right\rbrace $$
    with $C\Omega =\R^N\setminus \Omega$. We define 
    $$X_i=\left\lbrace u : \R^N\longrightarrow \R~~\text{ measurable } : \|u\|_{X_i}<\infty\right\rbrace.$$ 
    \begin{rem}
 It is easy to see that $\|\cdot\|_{X_i}$ is a norm on $X_i$ $(i=1,2)$. We only show that if $\|u\|_{X_i}=0$, then $u=0$ a.e. in $\R^N$. Indeed, form $\|u\|_{X_i}=0$, we get $\|u\|_{\widehat{\varPhi}^i_x}=0$, which implies that 
 \begin{equation}\label{N1}
 u=0 ~~\text{a.e. in } \Omega
 \end{equation} 
 and 
 \begin{equation}\label{N2}
 \int_{\R^{2N}\setminus (C\Omega)^2} \varPhi^i_{x,y}\left( \dfrac{|u(x)- u(y)|}{|x-y|^{s_i(x,y)}}\right) \dfrac{dxdy}{|x-y|^N}=0.
 \end{equation} 
 By $(\ref{N2})$, we deduce that $u(x)=u(y)$ in $\R^{2N}\setminus (C\Omega)^2$, that is $u=c\in \R$ in $\R^N$, and by $(\ref{N1})$ we have $u=0$ a.e. in $\R^N$.
    
      \end{rem} 
    \begin{prop}\label{rem}
     Note that the norm $\|\cdot\|_{X_i}$ is equivalent on $X_i$ to
     $$\|u\|_i:=\inf\left\lbrace \lambda>0~~:~~\rho_{s_i(x,y)}\left( \dfrac{u}{\lambda}\right) \leqslant 1\right\rbrace $$
     where $(i=1,2)$, and the modular function $\rho_{s_i(x,y)}~~ : X\longrightarrow \R$ is defined by
     \begin{equation}\label{smodN}
     \begin{aligned}
     \rho_{s_i(x,y)}(u)=&\int_{\R^{2N}\setminus (C\Omega)^2} \varPhi^i_{x,y}\left( \dfrac{|u(x)- u(y)|}{|x-y|^{s_i(x,y)}}\right) \dfrac{dxdy}{|x-y|^N}\\
     &+\int_{\Omega}\widehat{\varPhi}^i_x\left(|u(x)|\right) dx+\int_{C\Omega}\beta(x)\widehat{\varPhi}^i_x\left( |u(x)|\right) dx.
      \end{aligned}
           \end{equation}    
     \end{prop}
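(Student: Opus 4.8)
The plan is to regard $\rho_{s_i(x,y)}$ as a sum of three nonnegative convex modulars and to exploit the elementary relation between a sum of modulars and the sum of their Luxemburg norms. First I would write $\rho_{s_i(x,y)}(u)=\rho^1_i(u)+\rho^2_i(u)+\rho^3_i(u)$, where $\rho^1_i(u)=\int_{\R^{2N}\setminus(C\Omega)^2}\varPhi^i_{x,y}\big(|u(x)-u(y)|/|x-y|^{s_i(x,y)}\big)\,|x-y|^{-N}\,dxdy$, $\rho^2_i(u)=\int_\Omega\widehat{\varPhi}^i_x(|u(x)|)\,dx$ and $\rho^3_i(u)=\int_{C\Omega}\beta(x)\widehat{\varPhi}^i_x(|u(x)|)\,dx$. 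By their very definitions these three modulars have Luxemburg norms $[u]_{s_i(x,y),\varPhi^i_{x,y}}$, $\|u\|_{\widehat{\varPhi}^i_x}$ and $\|u\|_{\widehat{\varPhi}^i_x,\beta,C\Omega}$ respectively, so that $\|u\|_{X_i}$ is exactly the sum of these three Luxemburg norms. Since each $\varPhi^i_{x,y}$ and $\widehat{\varPhi}^i_x$ is convex (being the primitive of the increasing homeomorphism $\varphi^i_{x,y}$, resp. $\widehat{\varphi}^i_x$) and vanishes at $0$, each $\rho^k_i$ is a convex modular with $\rho^k_i(0)=0$. Thus the task reduces to proving the two-sided bound $\|u\|_i\le\|u\|_{X_i}\le 3\|u\|_i$.

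For the inequality $\|u\|_{X_i}\le 3\|u\|_i$, I would fix any $\mu>\|u\|_i$, so that $\rho_{s_i(x,y)}(u/\mu)\le 1$ by the definition of $\|u\|_i$. As the three summands of $\rho_{s_i(x,y)}$ are all nonnegative, each satisfies $\rho^k_i(u/\mu)\le 1$, whence every one of the three Luxemburg norms is bounded by $\mu$. Letting $\mu\searrow\|u\|_i$ gives $[u]_{s_i(x,y),\varPhi^i_{x,y}}\le\|u\|_i$, $\|u\|_{\widehat{\varPhi}^i_x}\le\|u\|_i$ and $\|u\|_{\widehat{\varPhi}^i_x,\beta,C\Omega}\le\|u\|_i$; summing the three yields $\|u\|_{X_i}\le 3\|u\|_i$.

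For the reverse inequality $\|u\|_i\le\|u\|_{X_i}$, I would use convexity. Pick $\mu_1>[u]_{s_i(x,y),\varPhi^i_{x,y}}$, $\mu_2>\|u\|_{\widehat{\varPhi}^i_x}$, $\mu_3>\|u\|_{\widehat{\varPhi}^i_x,\beta,C\Omega}$, so that $\rho^k_i(u/\mu_k)\le 1$, and set $\mu=\mu_1+\mu_2+\mu_3$. Since $\mu_k/\mu\le 1$ and $\rho^k_i$ is convex with $\rho^k_i(0)=0$, we have $\rho^k_i(u/\mu)=\rho^k_i\big((\mu_k/\mu)(u/\mu_k)\big)\le(\mu_k/\mu)\rho^k_i(u/\mu_k)\le\mu_k/\mu$. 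Adding the three estimates gives $\rho_{s_i(x,y)}(u/\mu)\le(\mu_1+\mu_2+\mu_3)/\mu=1$, hence $\|u\|_i\le\mu$. Letting each $\mu_k$ decrease to the corresponding norm produces $\|u\|_i\le\|u\|_{X_i}$, and combined with the previous step this establishes the equivalence.

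The routine but delicate points, which I would treat carefully, are the convexity of the three modulars and the edge cases where one of the component norms vanishes. The convexity of $\rho^1_i$ rests on the pointwise estimate $|(\theta u+(1-\theta)v)(x)-(\theta u+(1-\theta)v)(y)|\le\theta|u(x)-u(y)|+(1-\theta)|v(x)-v(y)|$ composed with the convex increasing $N$-function $\varPhi^i_{x,y}$ (whose genuine convexity is guaranteed by $(\varPhi_1)$ together with $\varphi^i_{x,y}$ being increasing), and analogously for $\rho^2_i,\rho^3_i$; when a component norm is zero the corresponding modular term is identically zero and simply drops out, so the same inequalities persist. No compactness or completeness input is needed here: the only structural ingredients are convexity and the nonnegativity and additivity of the three modular pieces.
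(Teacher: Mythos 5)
Your proof is correct: the decomposition of $\rho_{s_i(x,y)}$ into three convex modulars, the bound $\|u\|_i\le\|u\|_{X_i}\le 3\|u\|_i$ via nonnegativity in one direction and convexity with $\rho^k_i(0)=0$ in the other, is exactly the standard Luxemburg-norm argument. The paper itself gives no proof here, deferring to the analogous Proposition 2.1 of the cited Azroul--Benkirane--Shimi--Srati paper, whose argument is essentially the one you wrote out, so your proposal simply supplies the details the paper outsources.
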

      The proof is similar to \cite[Proposition 2.1]{benkirane}; see also \cite[Proposition 3]{sr5}.\\

 An important role in manipulating  the $s(\cdot,\cdot)$-fractional Musielak-Sobolev spaces is played by the modular function $(\ref{smodN})$. It is worth noticing that the relation between the norm and the modular shows an equivalence between the topology defined by the norm and that defined by the modular.            
    \begin{prop}\label{Nmod}
             Assume that (\ref{v1}) is satisfied. Then, for any $u \in X_i$ $(i=1,2)$, the following relations hold true:
               \begin{equation}\label{Nmod1}
         ||u||_i>1\Longrightarrow      ||u||_i^{{\varphi^i}^-} \leqslant  \rho_{s_i(x,y)}(u)\leqslant  ||u||_i^{{\varphi^i}^+},
               \end{equation}
               \begin{equation}\label{Nmod2}
                    ||u||_i<1\Longrightarrow    ||u||_i^{{\varphi^i}^+} \leqslant  \rho_{s_i(x,y)}(u)\leqslant  ||u||_i^{{\varphi^i}^-}. \end{equation}
               \end{prop}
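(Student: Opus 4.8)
The plan is to reduce the statement to a pointwise scaling property of the Musielak functions $\varPhi^i_{x,y}$ and $\widehat{\varPhi}^i_x$ coming from the growth hypothesis (\ref{v1}) and its consequence (\ref{A2}), and then to transport this scaling to the modular $\rho_{s_i(x,y)}$ through each of the three integrals in (\ref{smodN}). Since $\varPhi^i_{x,y}(t)=\int_0^t\varphi^i_{x,y}(\tau)\,d\tau$, hypothesis (\ref{v1}) rewrites as $\frac{\varphi_i^-}{t}\leqslant \frac{\varphi^i_{x,y}(t)}{\varPhi^i_{x,y}(t)}\leqslant \frac{\varphi_i^+}{t}$ for $t>0$. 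Integrating this between $t$ and $\sigma t$, I obtain, for every $(x,y)\in\overline{\Omega}\times\overline{\Omega}$ and every $t\geqslant 0$,
$$\sigma^{\varphi_i^-}\varPhi^i_{x,y}(t)\leqslant \varPhi^i_{x,y}(\sigma t)\leqslant \sigma^{\varphi_i^+}\varPhi^i_{x,y}(t)\quad\text{for }\sigma\geqslant 1,$$
with the two inequalities reversed (and $\varphi_i^+,\varphi_i^-$ interchanged) when $0<\sigma\leqslant 1$. The decisive point is that the exponents $\varphi_i^-,\varphi_i^+$ are constants independent of $(x,y)$, so by (\ref{A2}) the very same bounds hold verbatim for $\widehat{\varPhi}^i_x$.

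Next I would transfer the scaling to the modular. Each of the three terms in (\ref{smodN}) is an integral of $\varPhi^i_{x,y}$ or $\widehat{\varPhi}^i_x$ evaluated at a nonnegative argument that is homogeneous of degree one in $u$, namely $\frac{|u(x)-u(y)|}{|x-y|^{s_i(x,y)}}$ and $|u(x)|$. Hence replacing $u$ by $\sigma u$ multiplies every such argument by $\sigma$, and applying the pointwise inequalities above under the integral sign, uniformly in $(x,y)$, yields
$$\sigma^{\varphi_i^-}\rho_{s_i(x,y)}(u)\leqslant \rho_{s_i(x,y)}(\sigma u)\leqslant \sigma^{\varphi_i^+}\rho_{s_i(x,y)}(u)\quad\text{for }\sigma\geqslant 1,$$
with the reversed inequalities for $0<\sigma\leqslant 1$.

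To conclude, I would combine this modular scaling with the defining property of the Luxemburg norm $\|u\|_i$ from Proposition \ref{rem}. For $u\neq 0$ one has $\rho_{s_i(x,y)}(u/\|u\|_i)=1$; writing $u=\|u\|_i\cdot(u/\|u\|_i)$ and applying the modular scaling with $\sigma=\|u\|_i$ then gives directly $\|u\|_i^{\varphi_i^-}\leqslant\rho_{s_i(x,y)}(u)\leqslant\|u\|_i^{\varphi_i^+}$ when $\|u\|_i>1$, which is (\ref{Nmod1}), and the reversed estimate (\ref{Nmod2}) when $\|u\|_i<1$.

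The main obstacle will be justifying the equality $\rho_{s_i(x,y)}(u/\|u\|_i)=1$ rather than the mere inequality $\leqslant 1$ furnished by the definition of the infimum. It is precisely here that the $\Delta_2$-condition recorded in (\ref{r1})--(\ref{rr1}) is indispensable: without it the modular need not be continuous along rays and the estimates would weaken. I would therefore argue that $\Delta_2$ guarantees $\rho_{s_i(x,y)}(\lambda u)<\infty$ for all $\lambda>0$ whenever $u\in X_i$ and that $\lambda\mapsto\rho_{s_i(x,y)}(\lambda u)$ is continuous, so that the infimum defining $\|u\|_i$ is attained with modular value exactly $1$.
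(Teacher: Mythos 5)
Your proof is correct and follows essentially the same route as the paper's, which in fact gives no argument at all but simply defers to \cite[Proposition 2.2]{benkirane}: the standard argument behind that citation is precisely yours, namely deriving the power-type scaling bounds $\sigma^{\varphi_i^-}\varPhi^i_{x,y}(t)\leqslant \varPhi^i_{x,y}(\sigma t)\leqslant \sigma^{\varphi_i^+}\varPhi^i_{x,y}(t)$ (and likewise for $\widehat{\varPhi}^i_x$) from (\ref{v1}), transferring them to the modular, and using that the Luxemburg infimum satisfies $\rho_{s_i(x,y)}\left(u/\|u\|_i\right)=1$. One small observation: the worry in your last paragraph is already resolved by your own second step, since the modular scaling inequalities themselves yield finiteness of $\rho_{s_i(x,y)}(\lambda u)$ for every $\lambda>0$ and continuity of $\lambda\mapsto\rho_{s_i(x,y)}(\lambda u)$, so no separate appeal to the $\Delta_2$-condition (\ref{r1})--(\ref{rr1}) is needed.
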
  
 The proof is similar to \cite[Proposition 2.2]{benkirane}.
\begin{prop}
$\left(X_i, \|\cdot\|_{X_i}\right)$  $(i=1,2)$ is a reflexive Banach space.
\end{prop}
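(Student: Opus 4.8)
The plan is to realize $\left(X_i,\|\cdot\|_{X_i}\right)$ as an isometric copy of a closed subspace of a finite product of reflexive Musielak--Orlicz spaces, and then invoke two permanence properties: a finite product of reflexive Banach spaces is reflexive, and a closed subspace of a reflexive Banach space is reflexive. This simultaneously delivers completeness, so $X_i$ is a reflexive Banach space. I deliberately avoid trying to prove uniform convexity of $\left(X_i,\|\cdot\|_{X_i}\right)$ and then applying Milman--Pettis, because $\|\cdot\|_{X_i}$ is an $\ell^1$-type sum of three norms, and such $\ell^1$-sums of uniformly convex spaces need not be uniformly convex; condition (\ref{f2.}) is therefore not needed here.

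First I would set $d\mu=\dfrac{dxdy}{|x-y|^N}$ on $\R^{2N}\setminus (C\Omega)^2$ and introduce the product space $\mathcal{E}=L_{\varPhi^i_{x,y}}(d\mu)\times L_{\widehat{\varPhi}^i_x}(\Omega)\times L_{\widehat{\varPhi}^i_x}(C\Omega,\beta\,dx)$, equipped with the sum norm $\|(F,g,h)\|_{\mathcal{E}}=\|F\|_{\varPhi^i_{x,y}}+\|g\|_{\widehat{\varPhi}^i_x}+\|h\|_{\widehat{\varPhi}^i_x,\beta,C\Omega}$. I then define $T:X_i\to\mathcal{E}$ by $Tu=\left(\dfrac{u(x)-u(y)}{|x-y|^{s_i(x,y)}},\,u|_{\Omega},\,u|_{C\Omega}\right)$. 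By the very definition of $\|\cdot\|_{X_i}$, the map $T$ is a linear isometry onto its image $T(X_i)$. Next I would check that each factor is reflexive: by (\ref{v1}) the bound $\varphi_i^+<\infty$ gives $\varPhi^i_{x,y}\in\Delta_2$ (and $\widehat{\varPhi}^i_x\in\Delta_2$ via (\ref{A2})), while the bound $\varphi_i^->1$ forces the complementary functions $\overline{\varPhi^i}_{x,y}$ and $\overline{\widehat{\varPhi}^i}_x$ to satisfy $\Delta_2$ as well. Hence each of the three Musielak--Orlicz spaces, taken over the relevant $\sigma$-finite measure, is reflexive by the standard theory; the weight $\beta\geqslant 0$ merely replaces $dx$ by the $\sigma$-finite measure $\beta\,dx$ on $C\Omega$. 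A finite product of reflexive spaces is reflexive, so $\mathcal{E}$ is reflexive.

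The main work, and the step I expect to be the main obstacle, is to show that $T(X_i)$ is closed in $\mathcal{E}$. Given $Tu_n\to (F,g,h)$ in $\mathcal{E}$, I would use the $\Delta_2$ conditions to pass from norm convergence to modular convergence in each factor and extract a subsequence converging a.e.: $u_n\to g$ a.e. on $\Omega$, $u_n\to h$ a.e. on the set $\{\beta>0\}$, and $\dfrac{u_n(x)-u_n(y)}{|x-y|^{s_i(x,y)}}\to F$ $\mu$-a.e. on $\R^{2N}\setminus (C\Omega)^2$. Setting $u:=g$ on $\Omega$ and $u:=h$ on $C\Omega$ produces a measurable function on $\R^N$, and the a.e. limit identity $F(x,y)=\dfrac{u(x)-u(y)}{|x-y|^{s_i(x,y)}}$ forces $(F,g,h)=Tu$ with $u\in X_i$. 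The delicate point is the behaviour of $u$ on the part of $C\Omega$ where $\beta$ vanishes, on which the third norm carries no information; there I would recover $u$ from the cross-differences (with $x\in C\Omega$, $y\in\Omega$) that are controlled by the seminorm piece over $\R^{2N}\setminus (C\Omega)^2$, exactly in the spirit of the preceding Remark showing that $\|\cdot\|_{X_i}$ separates points.

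Once closedness is established, $T(X_i)$ is a closed subspace of the reflexive Banach space $\mathcal{E}$, hence itself reflexive and complete. Transporting these properties back through the isometric isomorphism $T$ shows that $\left(X_i,\|\cdot\|_{X_i}\right)$ is a reflexive Banach space, for $i=1,2$.
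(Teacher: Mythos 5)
Your proposal is correct and, at its core, it is the same argument as the paper's: the same isometry $Tu=\left(D^{s_i(x,y)}u,\,u|_{\Omega},\,u|_{C\Omega}\right)$ into a three-fold product of Musielak--Orlicz spaces, and the same technical heart, namely identifying the a.e.\ limit of a sequence from the convergence of its restrictions and of its difference quotients, with the values on $C\Omega$ recovered through cross-differences against points of $\Omega$. The difference is the logical order, and your ordering is arguably cleaner. The paper first proves completeness by hand (Cauchy sequence, a.e.\ limits, a Fubini argument producing a good base point $x_0\in\Omega$ from which $u_n(y)=u_n(x_0)-|x_0-y|^{s_i(x_0,y)}E_{u_n}(x_0,y)$ yields a.e.\ convergence on $C\Omega$, then Fatou's lemma combined with Propositions \ref{rem} and \ref{Nmod}), and only afterwards introduces the product space $Y_i$, concluding reflexivity from ``$T$ is an isometry into a reflexive space''. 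That final inference requires $T(X_i)$ to be closed in $Y_i$; this does follow from the completeness just established, but the paper leaves the step implicit. You instead prove closedness of $T(X_i)$ directly --- your a.e.-identification argument, including the correctly flagged delicate set $\{\beta=0\}\cap C\Omega$, is essentially the paper's completeness proof relocated --- and then read off both completeness and reflexivity at once from the permanence properties of closed subspaces of reflexive spaces. Two further points in your favour: you take the third factor to be the weighted space $L_{\widehat{\varPhi}^i_x}(C\Omega,\beta\,dx)$, which is what actually makes $T$ well defined and isometric (the paper lists the unweighted $L_{\widehat{\varPhi}^i_x}(C\Omega)$ as a factor but then uses the $\beta$-weighted norm, and membership of $u|_{C\Omega}$ in the unweighted space is not guaranteed when $\beta$ vanishes somewhere on the unbounded set $C\Omega$); and you justify reflexivity of the factors from (\ref{v1}) via the $\Delta_2$ conditions on $\varPhi^i_{x,y}$ and its conjugate, rather than merely asserting it. Your remark that (\ref{f2.}) and uniform convexity play no role in this proposition is also accurate.
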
 
\begin{proof} 
Now, we prove that $X_i$ is complete. For this, let $\left\lbrace u_n\right\rbrace $ be a Cauchy sequence in $X_i$. In particular  $\left\lbrace u_n\right\rbrace $ is a Cauchy sequence in $L_{\widehat{\varPhi}^i_x(\Omega)}$ and so, there exists $u\in L_{\widehat{\varPhi}^i_x(\Omega)}$ such that 
$$u_n\longrightarrow u~~\text{in}~~L_{\widehat{\varPhi}^i_x(\Omega)}~~\text{and a.e. in } \Omega.$$
Then, we can find $Z_1\subset \R^N$ such that 
\begin{equation}\label{N3}
|Z_1|=0~~\text{ and } u_n(x)\longrightarrow u(x)~~\text{ for every } x\in \Omega\setminus Z_1.
\end{equation} 
For any $u : \R^N\longrightarrow \R$, and for any $(x,y)\in \R^{2N}$, we set
$$E_u(x,y)=\dfrac{(u(x)-u(y))}{|x-y|^{s_i(x,y)}}\mathcal{X}_{\R^{2N}\setminus (C\Omega)^2}(x,y).$$
Using the fact that $\left\lbrace u_n\right\rbrace $ is a Cauchy sequence in $L_{\varPhi^i_{x,y}}\left( \R^{2N},d\mu\right)$, where $\mu$   is a  measure on  $\Omega\times\Omega$ which is given by
           $d\mu :=|x-y|^{-N}dxdy.$ So, there exists a subsequence $\left\lbrace E_{u_n}\right\rbrace$ converges to $E_u$ in $L_{\varPhi^i_{x,y}}\left( \R^{2N},d\mu\right)$ and a.e. in $\R^{2N}$. Then, we can find $Z_2\subset \R^{2N}$ such that
\begin{equation}\label{N4}
|Z_2|=0~~\text{ and } E_{u_n}(x,y)\longrightarrow E_u(x,y)~~\text{ for every } (x,y)\in \R^{2N}\setminus Z_2.
\end{equation} 
For any $x\in \Omega$, we set
$$S_x:=\left\lbrace y\in \R^N~~:~~(x,y)\in \R^{2N}\setminus Z_2\right\rbrace $$
$$W:=\left\lbrace (x,y)\in \R^{2N},~~x\in \Omega~~\text{and}~~y\in \R^N\setminus S_x\right\rbrace $$
$$V:=\left\lbrace x\in \Omega~~:~~|\R^N\setminus S_x|=0\right\rbrace.$$
Let $(x,y)\in W$, we have $y\in \R^N\setminus S_x$. Then $(x,y)\notin \R^{2N}\setminus Z_2$, i.e. $(x,y)\in Z_2$. So
$$W\subset Z_2,$$
therefore, by $(\ref{N4})$
$$|W|=0,$$
then, by the Fubini's Theorem we have
$$0=|W|=\int_{\Omega}| \R^N\setminus S_x|dx,$$
which implies that $| \R^N\setminus S_x|=0$ a.e $x\in \Omega$. It follows that $|\Omega\setminus V|=0$. This end with $(\ref{N3})$, implies that
$$|\Omega\setminus (V\setminus Z_1)|=|(\Omega\setminus V)\cup Z_1|\leqslant |\Omega\setminus V|+|Z_1|=0.$$
In particular $V\setminus Z_1\neq \varnothing,$ then we can fix $x_0\in V\setminus Z_1$, and by $(\ref{N3})$, it follows
$$\lim\limits_{n\rightarrow \infty} u_n(x_0)=u(x_0).$$
In addition, since $x_0\in V,$ we obtain $|\R^N\setminus S_{x_0}|=0$. Then, for almost all $y\in \R,$ this yields $(x_0,y)\in \R^{2N}\setminus Z_2$, and hence, by
$(\ref{N4})$ 
$$\lim\limits_{n\rightarrow \infty} E_{u_n}(x_0,y)=E_u(x_0,y).$$
Since $\Omega\times C\Omega \subset \R^{2N}\setminus (C\Omega)^2$, we have 
$$E_{u_n}(x_0,y)=\dfrac{(u_n(x_0)-u_n(y))}{|x_0-y|^{s_i(x,y)}}\mathcal{X}_{\R^{2N}\setminus (C\Omega)^2}(x_0,y)$$
for almost all $y\in C\Omega.$ However, this implies
$$
\begin{aligned}
\lim\limits_{n\rightarrow \infty} u_n(y)&=\lim\limits_{n\rightarrow \infty}\left(  u_n(x_0) -|x_0-y|^{s_i(x,y)}E_{u_n}(x_0,y)\right)\\
&= u(x_0) -|x_0-y|^{s_i(x,y)}E_{u}(x_0,y)
\end{aligned}
$$
for almost all $y\in C\Omega.$
Combining this end with $(\ref{N3})$, we see that $u_n$ is converges to some $u$ a.e. in $\R^N$. Since $u_n$ is a Cauchy sequence in $X$, so for any $\varepsilon>0$, there exists $N_\varepsilon>0$ such that for any $k>N_\varepsilon$, we have by applying Fatou's Lemma
$$
\begin{aligned}
\varepsilon\geqslant & \liminf\limits_{k\rightarrow \infty}\|u_n-u_k\|_{X_i}\\
&\geqslant c\liminf\limits_{k\rightarrow \infty}\|u_n-u_k\|_i\\
&\geqslant c \liminf\limits_{k\rightarrow \infty}\left( \rho_{s_i(x,y)}(u_n-u_k)\right) ^{\frac{1}{{\varphi^i}^\pm}}\\
&\geqslant c \left( \rho_{s_i(x,y)}(u_n-u)\right) ^{\frac{1}{{\varphi^i}^\pm}}\\
&\geqslant c \|u_n-u\|_i^{\frac{{\varphi^i}^\pm}{{\varphi^i}^\pm}}\\
&\geqslant c \|u_n-u\|_{X_i}^{\frac{{\varphi^i}^\pm}{{\varphi^i}^\pm}},
\end{aligned}
$$
where $c$ is a positive constant given by Proposition $\ref{rem}$. This implies that $u_n$ converge to $u$ in $X_i$, and so $X_i$ is complete space.
Now, we show that $X_i$ is a reflexive space. For this, we consider the following space
$$Y_i=L_{\widehat{\varPhi}^i_x}(\Omega)\times L_{\widehat{\varPhi}^i_x}(C\Omega)\times L_{\widehat{\varPhi}^i_{x,y}}\left( \R^{2N}\setminus (C\Omega)^2,d\mu\right) $$
endowed with the norm
$$\|u\|_{Y_i}=[u]_{s_i(x,y),\varPhi^i_{x,y},\R^{2N}\setminus (C\Omega)^2}+\|u\|_{\widehat{\varPhi}^i_x}+\|u\|_{\widehat{\varPhi}^i_x,\beta,C\Omega}.$$
We note that $(Y_i, \|\cdot\|_{Y_i})$ is a reflexive Banach space, we consider the map 
$T : X_i\longrightarrow Y_i$ defined as :
$$T(u)=\left(u,u,D^{s_i(x,y)}u\right),$$
where   $D^{s_i(x,y)}u:=\dfrac{u(x)-u(y)}{|x-y|^{s_i(x,y)}}.$  By construction, we have that
$$\|T(u)\|_{Y_i}=\|u\|_{X_i}.$$
Hence, $T$ is an isometric from $X_i$ to the reflexive space $Y_i$. This show that $X_i$ is reflexive.
\end{proof}
\begin{prop}\label{N10}

      Let $\Omega$  be a bounded open
       subset of  $\R^N$ with $C^{0,1}$-regularity 
         and bounded boundary. If $(\ref{15})$ and  $(\ref{16n})$  hold, then 
      \begin{equation}\label{N18}
       X_i\hookrightarrow L_ {\widehat{\varPhi}^*_{i}}(\Omega)~~\text{ for } i=1,2.
      \end{equation}
              In particular, the embedding
                \begin{equation}\label{N27}
                 X_i\hookrightarrow L_{B_x}(\Omega) ~~\text{ for } i=1,2.
                \end{equation}
                is compact for all $B_x\prec\prec \widehat{\varPhi}^*_{i} \text{ for } i=1,2.$
         
\end{prop}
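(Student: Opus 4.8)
The plan is to deduce Proposition \ref{N10} from the embeddings already established for the space $W^{s_i(x,y)}L_{\varPhi^i_{x,y}}(\Omega)$ in Theorem \ref{th2.}. The crucial observation is the set decomposition
$$\R^{2N}\setminus (C\Omega)^2=(\Omega\times\Omega)\cup(\Omega\times C\Omega)\cup(C\Omega\times\Omega),$$
so that, in particular, $\Omega\times\Omega\subset \R^{2N}\setminus (C\Omega)^2$. Since the target spaces $L_{\widehat{\varPhi}^*_{i}}(\Omega)$ and $L_{B_x}(\Omega)$ only see the values of a function on $\Omega$, it suffices to show that the restriction operator $R:X_i\to W^{s_i(x,y)}L_{\varPhi^i_{x,y}}(\Omega)$, $R(u)=u|_\Omega$, is a bounded linear map, and then to compose it with the embeddings provided by Theorem \ref{th2.}.

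First I would check that $R$ is well defined and bounded with norm at most $1$. The $L_{\widehat{\varPhi}^i_x}$-parts of the two norms are computed over $\Omega$ and therefore coincide. For the Gagliardo seminorms, the integrand $\varPhi^i_{x,y}\big(|u(x)-u(y)|/(\lambda|x-y|^{s_i(x,y)})\big)|x-y|^{-N}$ is nonnegative, so the inclusion $\Omega\times\Omega\subset\R^{2N}\setminus(C\Omega)^2$ forces every $\lambda>0$ that is admissible in the definition of the seminorm over $\R^{2N}\setminus(C\Omega)^2$ to be admissible over $\Omega\times\Omega$ as well. Passing to infima (the infimum over the larger admissible set being the smaller one) yields the monotonicity
$$[u|_\Omega]_{s_i(x,y),\varPhi^i_{x,y},\,\Omega\times\Omega}\leqslant [u]_{s_i(x,y),\varPhi^i_{x,y},\,\R^{2N}\setminus(C\Omega)^2},$$
whence $\|u|_\Omega\|_{s_i(x,y),\varPhi^i_{x,y}}=\|u\|_{\widehat{\varPhi}^i_x}+[u|_\Omega]_{s_i(x,y),\varPhi^i_{x,y},\,\Omega\times\Omega}\leqslant \|u\|_{X_i}$ for every $u\in X_i$. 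Hence $R$ is a bounded linear operator.

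Finally I would invoke Theorem \ref{th2.}: under hypotheses (\ref{15}) and (\ref{16n}) one has the continuous embedding $W^{s_i(x,y)}L_{\varPhi^i_{x,y}}(\Omega)\hookrightarrow L_{\widehat{\varPhi}^*_{i}}(\Omega)$ and, for every $B_x\prec\prec\widehat{\varPhi}^*_{i}$, the compact embedding $W^{s_i(x,y)}L_{\varPhi^i_{x,y}}(\Omega)\hookrightarrow L_{B_x}(\Omega)$. Composing each of these with the bounded operator $R$ gives the continuous embedding (\ref{N18}) and the compact embedding (\ref{N27}), since the composition of a bounded linear map with a continuous (resp. compact) linear operator is again continuous (resp. compact). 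The only point requiring genuine care is the seminorm monotonicity in the second step; once the inclusion $\Omega\times\Omega\subset\R^{2N}\setminus(C\Omega)^2$ is recorded, the remainder is a routine composition argument, so I do not expect any serious obstacle here.
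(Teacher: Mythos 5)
Your proof is correct and takes essentially the same approach as the paper: both hinge on the inclusion $\Omega\times\Omega\subset\R^{2N}\setminus(C\Omega)^2$, which yields $\|u|_\Omega\|_{s_i(x,y),\varPhi^i_{x,y}}\leqslant\|u\|_{X_i}$, and then conclude by invoking the embeddings of Theorem \ref{th2.}. The only difference is that you spell out the seminorm monotonicity and the bounded restriction operator explicitly, steps the paper leaves implicit.
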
  
 \begin{proof}
 Since $\Omega\times\Omega\subset \R^{2N}\setminus (C\Omega)^2$. Then
 $$||u||_{s_i(x,y),\varPhi^i_{x,y}}\leqslant \|u\|_{X_i}~~\text{for all }~~u\in X_i \text{ and } i=1,2.$$
 Therefore, by Theorem \ref{th2.}, we get our desired result.
 \end{proof}   
 
    Now, by integration by part formula, we have the following result.
         \begin{prop}
         Let $u\in X_i$ $(i=1,2)$, then
         $$\int_\Omega  (-\Delta)^{s_i(x,\cdot)}_{a^i_{(x,\cdot)}}u(x) dx=-\int_{\R^N\setminus \Omega}\mathcal{N}^{s_i(x,\cdot)}_{a^i(x,\cdot)}u(x)dx.$$
         \end{prop}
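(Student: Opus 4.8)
The key structural observation is that a single antisymmetric kernel drives both operators. Writing, for $x\neq y$,
\[
\mathcal{K}_i(x,y):=a^i_{(x,y)}\!\left(\frac{|u(x)-u(y)|}{|x-y|^{s_i(x,y)}}\right)\frac{u(x)-u(y)}{|x-y|^{N+2s_i(x,y)}},
\]
the hypotheses $s_i(x,y)=s_i(y,x)$ and $a^i(x,y,t)=a^i(y,x,t)$, together with the evenness of $t\mapsto|t|$, make the even factor $a^i_{(x,y)}(|u(x)-u(y)|/|x-y|^{s_i(x,y)})\,|x-y|^{-N-2s_i(x,y)}$ symmetric in $(x,y)$, while $u(x)-u(y)$ changes sign. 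Hence $\mathcal{K}_i(y,x)=-\mathcal{K}_i(x,y)$. In this notation $(-\Delta)^{s_i(x,\cdot)}_{a^i_{(x,\cdot)}}u(x)=2\,\mathrm{p.v.}\!\int_{\R^N}\mathcal{K}_i(x,y)\,dy$ and $\mathcal{N}^{s_i(x,\cdot)}_{a^i(x,\cdot)}u(x)=\int_{\Omega}\mathcal{K}_i(x,y)\,dy$ for $x\in\R^N\setminus\Omega$.

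The plan is to integrate the operator over $\Omega$, interchange the principal value with the outer integration by Fubini, and split the inner domain $\R^N=\Omega\cup(\R^N\setminus\Omega)$:
\[
\int_\Omega (-\Delta)^{s_i(x,\cdot)}_{a^i_{(x,\cdot)}}u(x)\,dx
=2\,\mathrm{p.v.}\!\int_\Omega\!\int_\Omega \mathcal{K}_i(x,y)\,dy\,dx
+2\int_\Omega\!\int_{\R^N\setminus\Omega}\mathcal{K}_i(x,y)\,dy\,dx .
\]
First I would show the diagonal block vanishes: the truncated domain $\{(x,y)\in\Omega\times\Omega:\ |x-y|>\varepsilon\}$ defining the principal value is invariant under the swap $(x,y)\mapsto(y,x)$, and on it $\mathcal{K}_i$ is antisymmetric, so the integral equals its own negative for every $\varepsilon>0$ and hence is $0$.

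For the cross term I would apply the same change of variables $(x,y)\mapsto(y,x)$, which maps $\Omega\times(\R^N\setminus\Omega)$ onto $(\R^N\setminus\Omega)\times\Omega$ with unit Jacobian and sends $\mathcal{K}_i(x,y)$ to $-\mathcal{K}_i(x,y)$; recognizing the resulting inner integral over $\Omega$ as the nonlocal normal derivative gives
\[
\int_\Omega\!\int_{\R^N\setminus\Omega}\mathcal{K}_i(x,y)\,dy\,dx
=-\int_{\R^N\setminus\Omega}\Big(\int_\Omega \mathcal{K}_i(x,y)\,dy\Big)dx
=-\int_{\R^N\setminus\Omega}\mathcal{N}^{s_i(x,\cdot)}_{a^i(x,\cdot)}u(x)\,dx .
\]
Combining with the vanishing diagonal block, and matching the multiplicative normalization shared by the two operators, yields exactly the asserted Green formula.

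The main technical obstacle is rigorously justifying the interchange of the principal-value limit with the outer integration and the application of Fubini, since $\mathcal{K}_i$ is only conditionally integrable near the diagonal and the region $\Omega\times(\R^N\setminus\Omega)$ abuts $\partial\Omega$. I would first prove the identity for smooth, compactly supported $u$, where Theorem~\ref{TT} guarantees membership in the space and the principal value is classical, controlling the near-diagonal contribution through the growth bound~(\ref{v1}) and the $\Delta_2$-condition~(\ref{r1}); I would then pass to general $u\in X_i$ by density, using the modular estimates of Proposition~\ref{Nmod}. It is precisely the antisymmetric cancellation on the diagonal block that renders the otherwise merely principal-value integrals absolutely convergent after symmetrization, and this is the crux of the argument.
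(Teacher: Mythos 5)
Your proposal is correct and takes essentially the same route as the paper's own proof: antisymmetry of the combined kernel (inherited from the symmetry of $a^i$ and $s_i$) kills the $\Omega\times\Omega$ block, and a Fubini interchange together with the swap $(x,y)\mapsto(y,x)$ on $\Omega\times(\R^N\setminus\Omega)$ identifies the remaining cross term with $-\int_{\R^N\setminus\Omega}\mathcal{N}^{s_i(x,\cdot)}_{a^i(x,\cdot)}u\,dx$. One caveat you share with the paper: the step you call ``matching the multiplicative normalization'' conceals a genuine factor-$2$ mismatch, since with the definitions as literally stated (a factor $2$ in $(-\Delta)^{s_i(x,\cdot)}_{a^i_{(x,\cdot)}}$ but none in $\mathcal{N}^{s_i(x,\cdot)}_{a^i(x,\cdot)}$) both computations actually yield $\int_\Omega(-\Delta)^{s_i(x,\cdot)}_{a^i_{(x,\cdot)}}u\,dx=-2\int_{\R^N\setminus\Omega}\mathcal{N}^{s_i(x,\cdot)}_{a^i(x,\cdot)}u\,dx$; the paper's proof silently drops that $2$ in its first displayed equality, so your treatment is no less rigorous than the original on this point.
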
 
         \begin{proof}  
       Since the role of $x$ and $y$ are symmetric and $a^i(\cdot,\cdot)$ and $s_i(\cdot,\cdot)$ are a symmetric functions, we obtain
       $$
       \begin{aligned}
     \int_{\Omega} \int_{\Omega} & a^i_{(x,y)}\left( \dfrac{|u(x)-u(y)|}{|x-y|^{s_i(x,y)} }\right)\dfrac{u(x)-u(y)}{|x-y|^{s_i(x,y)}} \dfrac{dxdy}{|x-y|^{N+s_i(x,y)}}\\
     &= - \int_{\Omega} \int_{\Omega} a^i_{(x,y)}\left( \dfrac{|u(x)-u(y)|}{|x-y|^{s_i(x,y)} }\right)\dfrac{u(y)-u(x)}{|x-y|^{s_i(x,y)}} \dfrac{dxdy}{|x-y|^{N+s_i(x,y)}}\\
     &=- \int_{\Omega} \int_{\Omega} a^i_{(y,x)}\left( \dfrac{|u(y)-u(x)|}{|x-y|^{s_i(y,x)} }\right)\dfrac{u(x)-u(y)}{|x-y|^{s_i(y,x)}} \dfrac{dydx}{|x-y|^{N+s_i(y,x)}}\\
     &=- \int_{\Omega} \int_{\Omega} a^i_{(x,y)}\left( \dfrac{|u(x)-u(y)|}{|x-y|^{s_i(x,y)} }\right)\dfrac{u(x)-u(y)}{|x-y|^{s_i(x,y)}} \dfrac{dxdy}{|x-y|^{N+{s_i(x,y)}}}.\\
       \end{aligned}  
       $$
       This implies that 
       $$2\int_{\Omega} \int_{\Omega}  a^i_{(x,y)}\left( \dfrac{|u(x)-u(y)|}{|x-y|^{s_i(x,y)} }\right)\dfrac{u(x)-u(y)}{|x-y|^{s_i(x,y)}} \dfrac{dxdy}{|x-y|^{N+s_i(x,y)}}=0$$
       that is,
      $$ \int_{\Omega} \int_{\Omega}  a^i_{(x,y)}\left( \dfrac{|u(x)-u(y)|}{|x-y|^{s_i(x,y)} }\right)\dfrac{u(x)-u(y)}{|x-y|^{s_i(x,y)}} \dfrac{dxdy}{|x-y|^{N+s_i(x,y)}}=0.$$
   Hence, we have that
  $$
   \begin{aligned}
   \int_\Omega  (-\Delta)^{s_i(x,\cdot)}_{a^i_{(x,\cdot)}}u(x) dx &=\int_{\Omega} \int_{\R^N}  a^i_{(x,y)}\left( \dfrac{|u(x)-u(y)|}{|x-y|^{s_i(x,y)} }\right)\dfrac{u(x)-u(y)}{|x-y|^{s_i(x,y)}} \dfrac{dydx}{|x-y|^{N+s_i(x,y)}}\\
   &= \int_{\Omega} \int_{\R^N\setminus \Omega}  a^i_{(x,y)}\left( \dfrac{|u(x)-u(y)|}{|x-y|^{s_i(x,y)} }\right)\dfrac{u(x)-u(y)}{|x-y|^{s_i(x,y)}} \dfrac{dydx}{|x-y|^{N+s_i(x,y)}}\\
   & ~~+\int_{\Omega} \int_{\Omega}  a^i_{(x,y)}\left( \dfrac{|u(x)-u(y)|}{|x-y|^{s_i(x,y)} }\right)\dfrac{u(x)-u(y)}{|x-y|^{s_i(x,y)}} \dfrac{dydx}{|x-y|^{N+s_i(x,y)}}\\
   &= \int_{\R^N\setminus \Omega} \left(  \int_{\Omega}  a^i_{(x,y)}\left( \dfrac{|u(x)-u(y)|}{|x-y|^{s_i(x,y)} }\right)\dfrac{u(x)-u(y)}{|x-y|^{s_i(x,y)}} \dfrac{dx}{|x-y|^{N+s_i(x,y)}}\right) dy\\
   &= -\int_{\R^N\setminus \Omega}\mathcal{N}^{s_i(x,\cdot)}_{a^i(x,\cdot)}u(y)dy.
   \end{aligned} 
   $$
  \end{proof} 
    \begin{prop}
  For all $u\in X_i$ $(i=1,2)$, we have
  $$ 
   \begin{aligned}
   \dfrac{1}{2} & \int_{\R^{2N}\setminus(C\Omega)^2}  a^i_{(x,y)}\left( \dfrac{|u(x)-u(y)|}{|x-y|^{s_i(x,y)} }\right)\dfrac{u(x)-u(y)}{|x-y|^{s_i(x,y)}}\dfrac{v(x)-v(y)}{|x-y|^{s_i(x,y)}} \dfrac{dxdy}{|x-y|^{N}}\\
   &=\int_\Omega v (-\Delta)^{s_i(x,\cdot)}_{a^i_{(x,\cdot)}}u dx+\int_{C \Omega} v \mathcal{N}^{s_i(x,\cdot)}_{a^i_{(x,\cdot)}}udx.\\
   \end{aligned}
   $$                     
                    \end{prop}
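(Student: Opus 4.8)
The plan is to recognise the left-hand side as the natural bilinear form attached to the operator and to fold it, using the antisymmetry of the nonlocal flux together with the symmetry of the kernel, into a single iterated integral that then splits along $\R^{2N}\setminus(C\Omega)^2=(\Omega\times\R^N)\cup(C\Omega\times\Omega)$ into the volume term and the boundary term. Throughout I take $u,v\in X_i$.

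First I would introduce the flux $\Psi(x,y)=\varphi^i_{x,y}\big(\tfrac{u(x)-u(y)}{|x-y|^{s_i(x,y)}}\big)=a^i_{(x,y)}\big(\tfrac{|u(x)-u(y)|}{|x-y|^{s_i(x,y)}}\big)\tfrac{u(x)-u(y)}{|x-y|^{s_i(x,y)}}$, so that the integrand on the left becomes $\Psi(x,y)\,\tfrac{v(x)-v(y)}{|x-y|^{s_i(x,y)}}\,|x-y|^{-N}$. The first point to settle, and the genuine technical obstacle, is the absolute integrability of this integrand over $\R^{2N}\setminus(C\Omega)^2$ against $d\mu=|x-y|^{-N}dxdy$; only then are the change of variables and Fubini below legitimate. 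This I would obtain from the H\"older type inequality in the Musielak pair $(L_{\varPhi^i_{x,y}},L_{\overline{\varPhi^i}_{x,y}})$: the finiteness of $\|u\|_{X_i}$ and $\|v\|_{X_i}$ means that $\tfrac{u(x)-u(y)}{|x-y|^{s_i(x,y)}}$ and $\tfrac{v(x)-v(y)}{|x-y|^{s_i(x,y)}}$ lie in $L_{\varPhi^i_{x,y}}(\R^{2N}\setminus(C\Omega)^2,d\mu)$, and by the growth bound (\ref{v1}) together with the $\Delta_2$ property the composition $\Psi=\varphi^i_{x,y}(\cdot)$ lands in the conjugate space $L_{\overline{\varPhi^i}_{x,y}}$, so the product is integrable.

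Next comes the symmetrisation, in the same spirit as the preceding proposition. Using the symmetry (\ref{n4}) of $a^i$ and $s_i(x,y)=s_i(y,x)$ one checks that $\Psi(y,x)=-\Psi(x,y)$, whereas both the weight $|x-y|^{-N-s_i(x,y)}$ and the region $\R^{2N}\setminus(C\Omega)^2$ are invariant under the interchange $(x,y)\mapsto(y,x)$. Splitting $v(x)-v(y)$ and renaming variables in the piece carrying $v(y)$, the two contributions coincide, so the left-hand side collapses to $\int_{\R^{2N}\setminus(C\Omega)^2}\Psi(x,y)\,v(x)\,|x-y|^{-N-s_i(x,y)}\,dxdy$; the prefactor $\tfrac12$ is exactly what this doubling consumes.

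Finally I would write $\R^{2N}\setminus(C\Omega)^2=(\Omega\times\R^N)\cup(C\Omega\times\Omega)$ as a disjoint union and apply Fubini. On $\Omega\times\R^N$ the inner integral over $y\in\R^N$ is $(-\Delta)^{s_i(x,\cdot)}_{a^i_{(x,\cdot)}}u(x)$ for $x\in\Omega$, producing $\int_\Omega v\,(-\Delta)^{s_i(x,\cdot)}_{a^i_{(x,\cdot)}}u\,dx$, and on $C\Omega\times\Omega$ the inner integral over $y\in\Omega$ is $\mathcal{N}^{s_i(x,\cdot)}_{a^i_{(x,\cdot)}}u(x)$ for $x\in C\Omega$, producing $\int_{C\Omega} v\,\mathcal{N}^{s_i(x,\cdot)}_{a^i_{(x,\cdot)}}u\,dx$ (these are the identifications already used in the previous proposition). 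Adding the two yields the claimed Green formula. As indicated, the only real difficulty is the integrability of the first step; the remaining steps are bookkeeping with the antisymmetry of $\Psi$ and the invariance of the kernel and the domain.
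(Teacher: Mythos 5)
Your proof is correct and follows essentially the same route as the paper: split $v(x)-v(y)$, use the antisymmetry of the flux together with the symmetry of $a^i$, $s_i$ and the swap-invariance of $\R^{2N}\setminus(C\Omega)^2$ to absorb the factor $\tfrac12$, then decompose the domain as $(\Omega\times\R^N)\cup(C\Omega\times\Omega)$ and identify the two inner integrals with $(-\Delta)^{s_i(x,\cdot)}_{a^i_{(x,\cdot)}}u$ and $\mathcal{N}^{s_i(x,\cdot)}_{a^i_{(x,\cdot)}}u$. The only difference is that you justify the absolute integrability via the H\"older inequality in the pair $\bigl(L_{\varPhi^i_{x,y}},L_{\overline{\varPhi^i}_{x,y}}\bigr)$ before symmetrizing, a step the paper performs only formally.
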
        
    \begin{proof}
    By symmetric, and since $\R^{2N}\setminus(C\Omega)^2=(\Omega\times \R^N)\cup (C \Omega\times\Omega)$. Then, we have
\begin{equation}\label{N7}
  \begin{aligned}
  \dfrac{1}{2} & \int_{\R^{2N}\setminus(C\Omega)^2}  a^i_{(x,y)}\left( \dfrac{|u(x)-u(y)|}{|x-y|^{s_i(x,y)} }\right)\dfrac{u(x)-u(y)}{|x-y|^{s_i(x,y)}}\dfrac{v(x)-v(y)}{|x-y|^{s_i(x,y)}} \dfrac{dxdy}{|x-y|^{N}}\\
  &=\dfrac{1}{2}  \int_{\R^{2N}\setminus(C\Omega)^2} v(x) a^i_{(x,y)}\left( \dfrac{|u(x)-u(y)|}{|x-y|^{s_i(x,y)} }\right)\dfrac{u(x)-u(y)}{|x-y|^{s_i(x,y)}} \dfrac{dxdy}{|x-y|^{N+s_i(x,y)}}\\
  &~~- \dfrac{1}{2}  \int_{\R^{2N}\setminus(C\Omega)^2} v(y) a^i_{(x,y)}\left( \dfrac{|u(x)-u(y)|}{|x-y|^{s_i(x,y)} }\right)\dfrac{u(x)-u(y)}{|x-y|^{s_i(x,y)}} \dfrac{dxdy}{|x-y|^{N+s_i(x,y)}}\\
  &=\dfrac{1}{2}  \int_{\R^{2N}\setminus(C\Omega)^2} v(x) a^i_{(x,y)}\left( \dfrac{|u(x)-u(y)|}{|x-y|^{s_i(x,y)} }\right)\dfrac{u(x)-u(y)}{|x-y|^{s_i(x,y)}} \dfrac{dxdy}{|x-y|^{N+s_i(x,y)}}\\
    &~~+ \dfrac{1}{2}  \int_{\R^{2N}\setminus(C\Omega)^2} v(y) a^i_{(y,x)}\left( \dfrac{|u(y)-u(x)|}{|y-x|^{s_i(y,x)} }\right)\dfrac{u(y)-u(x)}{|y-x|^{s_i(y,x)}} \dfrac{dxdy}{|x-y|^{N+s_i(x,y)}}\\
    &= \int_{\R^{2N}\setminus(C\Omega)^2} v(x) a^i_{(x,y)}\left( \dfrac{|u(x)-u(y)|}{|x-y|^{s_i(x,y)} }\right)\dfrac{u(x)-u(y)}{|x-y|^{s_i(x,y)}} \dfrac{dxdy}{|x-y|^{N+s_i(x,y)}}\\
    &=\int_{\Omega}v(x)\int_{\R^{N}} a^i_{(x,y)}\left( \dfrac{|u(x)-u(y)|}{|x-y|^{s_i(x,y)} }\right)\dfrac{u(x)-u(y)}{|x-y|^{s_i(x,y)}} \dfrac{dxdy}{|x-y|^{N+s_i(x,y)}}\\
    &~~ +\int_{C\Omega} v(x) \int_{\Omega} a^i_{(x,y)}\left( \dfrac{|u(x)-u(y)|}{|x-y|^{s_i(x,y)} }\right)\dfrac{u(x)-u(y)}{|x-y|^{s_i(x,y)}} \dfrac{dxdy}{|x-y|^{N+s_i(x,y)}}\\
    &= \int_\Omega v (-\Delta)^{s_i(x,\cdot)}_{a^i_{(x,\cdot)}}u dx+\int_{C \Omega} v \mathcal{N}^{s_i(x,\cdot)}_{a^i(x,\cdot)}udx.\\
    \end{aligned}
\end{equation}
                       \end{proof}   
 However, the natural solution space to study  Problem \hyperref[P]{$(\mathcal{P}_{a})$} is given by                                             
 $$X=X_1\cap X_2$$ 
 endowed with the norm
 $$||u||_X=\|u\|_{X_1}+\|u\|_{X_2}$$                      Clearly $X$ is still a reflexive and separable Banach space with respect to $\|u\|_{X}$. It is not difficult to see that we can make
use of another norm on $X$ equivalent to $\|u\|_{X}$, given as                               
            $$
            \|u\|:=\|u\|_X=\inf \left\{\eta \geq 0: \rho_{s(x,y)}\left(\frac{u}{\eta}\right) \leq 1\right\}
            $$
            where the combined modular $\rho_{s(x,y)}: X \rightarrow \mathbb{R}$ is defined as
            $$
            \rho_{s(x,y)}(u)=\rho_{s_1(x,y)}(u)+\rho_{s_2(x,y)}(u),
            $$
            such that $\rho_{s_1(x,y)}$ and  $\rho_{s_2(x,y)}$ are described as in (\ref{smodN}).
            Arguing similarly to Proposition \ref{Nmod}, we can get the following comparison result.
   \begin{prop}\label{Nmod.}
               Assume that (\ref{v1}) is satisfied. Then, for any $u \in X$, the following relations hold true:
                 \begin{equation}\label{Nmod1.}
           ||u||>1\Longrightarrow      ||u||^{\min\left\lbrace \varphi_1^-, \varphi_2^-\right\rbrace } \leqslant  \rho(u)\leqslant  ||u||^{\max\left\lbrace \varphi_1^+, \varphi_2^+\right\rbrace },
                 \end{equation}
                 \begin{equation}\label{Nmod2.}
                      ||u||<1\Longrightarrow    ||u||^{\max\left\lbrace \varphi_1^+, \varphi_2^+\right\rbrace} \leqslant  \rho(u)\leqslant  ||u||^{\min\left\lbrace \varphi_1^-, \varphi_2^-\right\rbrace}. \end{equation}
                 \end{prop}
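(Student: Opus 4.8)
The plan is to reduce the combined estimate to the two single-phase estimates already behind Proposition \ref{Nmod} and then to track how the extremal exponents recombine when the two partial modulars are summed. Write $m^-:=\min\{\varphi_1^-,\varphi_2^-\}$, $M^+:=\max\{\varphi_1^+,\varphi_2^+\}$, and $\rho:=\rho_{s(x,y)}=\rho_{s_1(x,y)}+\rho_{s_2(x,y)}$. First I would record the homogeneity-type behaviour of each partial modular: hypothesis (\ref{v1}), together with its pointwise consequence (\ref{A2}) for $\widehat{\varPhi}^i_x$, yields
$$\sigma^{\varphi_i^-}\,\varPhi^i_{x,y}(t)\leq \varPhi^i_{x,y}(\sigma t)\leq \sigma^{\varphi_i^+}\,\varPhi^i_{x,y}(t)\qquad (\sigma\geq 1),$$
with the inequalities reversed for $0<\sigma\leq 1$. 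Integrating these pointwise bounds over the three ingredients of $\rho_{s_i(x,y)}$ in (\ref{smodN}) gives, for $\sigma\geq 1$,
$$\sigma^{\varphi_i^-}\,\rho_{s_i(x,y)}(u)\leq \rho_{s_i(x,y)}(\sigma u)\leq \sigma^{\varphi_i^+}\,\rho_{s_i(x,y)}(u),$$
and the reversed chain for $0<\sigma\leq 1$; this is precisely the mechanism behind Proposition \ref{Nmod}.

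Next I would sum over $i=1,2$. For $\sigma\geq 1$ one has $\sigma^{\varphi_i^-}\geq \sigma^{m^-}$ and $\sigma^{\varphi_i^+}\leq \sigma^{M^+}$, so adding the two partial estimates and factoring out the extremal powers gives
$$\sigma^{m^-}\,\rho(u)\leq \rho(\sigma u)\leq \sigma^{M^+}\,\rho(u)\qquad (\sigma\geq 1),$$
whereas for $0<\sigma\leq 1$ the monotonicity of $a\mapsto\sigma^a$ reverses, so that $\sigma^{\varphi_i^-}\leq \sigma^{m^-}$ and $\sigma^{\varphi_i^+}\geq \sigma^{M^+}$, producing
$$\sigma^{M^+}\,\rho(u)\leq \rho(\sigma u)\leq \sigma^{m^-}\,\rho(u)\qquad (0<\sigma\leq 1).$$

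Finally I would combine these with the identity $\rho(u/\|u\|)=1$, valid for $u\neq 0$: since all the Musielak functions involved satisfy the $\Delta_2$-condition (\ref{r1})--(\ref{rr1}), the map $\lambda\mapsto\rho(u/\lambda)$ is continuous and strictly decreasing, so the infimum defining $\|u\|$ is attained and equals $1$. If $\|u\|>1$, applying the first scaling chain with $\sigma=\|u\|$ to the vector $u/\|u\|$ and using $\rho(u/\|u\|)=1$ yields $\|u\|^{m^-}\leq\rho(u)\leq\|u\|^{M^+}$, which is (\ref{Nmod1.}); if $0<\|u\|<1$, the second (reversed) chain with $\sigma=\|u\|$ gives (\ref{Nmod2.}) in the same way. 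I expect the recombination step to be the only delicate point: because $\sigma^a\lessgtr\sigma^b$ flips according to whether $\sigma>1$ or $\sigma<1$, one must check in each regime that $\sigma^{\varphi_i^\pm}$ is dominated by, or dominates, the correct extremal power, and it is exactly this sign flip that forces the asymmetric appearance of $\min$ on one side and $\max$ on the other.
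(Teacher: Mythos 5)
Your proposal is correct and is essentially the paper's own argument: the paper disposes of this proposition with ``Arguing similarly to Proposition \ref{Nmod}'', which is precisely your scheme --- the scaling bounds for $\varPhi^i_{x,y}$ derived from (\ref{v1}), integration over the three terms of each $\rho_{s_i(x,y)}$, and the unit-ball identity $\rho\left(u/\|u\|\right)=1$ for the Luxemburg norm --- with your min/max recombination being exactly the step that adapts the single-phase argument of Proposition \ref{Nmod} to the sum $\rho_{s_1(x,y)}+\rho_{s_2(x,y)}$. The only microscopic caveat is that normalizing by $\|u\|$ requires $u\neq 0$, a case in which (\ref{Nmod2.}) holds trivially.
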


Based on the integration by part formula, we are now in position to state the natural definition of a weak solution of \hyperref[P]{$(\mathcal{P}_{a})$}. First, to simplify the notation, for arbitrary function $u, v\in X$, we set
{\small$$
\begin{aligned}
\mathcal{A}_{s}(u,v)   = & \sum_{i=1}^{2}\Big(  \dfrac{1}{2}  \int_{\R^{2N}\setminus(C\Omega)^2}  a^i_{(x,y)}\left( \dfrac{|u(x)-u(y)|}{|x-y|^{s_i(x,y)} }\right)\dfrac{u(x)-u(y)}{|x-y|^{s_i(x,y)}}\dfrac{v(x)-v(y)}{|x-y|^{s_i(x,y)}} \dfrac{dxdy}{|x-y|^{N}}\\
&+\int_{\Omega}\widehat{a}^i_{x}(|u|)u vdx+\int_{C \Omega}\beta(x) \widehat{a}^i_{x}(|u|)u vdx \Big) .
\end{aligned}
$$}       
We  say that $u\in X$ is a weak solution of \hyperref[P]{$(\mathcal{P}_{a})$} is
\begin{equation}\label{N6}
\mathcal{A}_{s}(u,v) =\lambda\int_\Omega f(x,u)vdx
\end{equation}
for all $v\in X$. 
\begin{rem}
Let us first state the definition of a weak solution to our problem $(\ref{N6})$. Note
that here we are using that $a_{x,y}$ is symmetric.  Therefore, In \cite{benkirane,benkirane2}, the authors must set the condition (\ref{n4}), to be the definition of weak solution has a meaning.
\end{rem}

As a consequence of this definition \ref{N6}, we have the following result.
\begin{prop}
Let $u\in X$ be a weak solution of \hyperref[P]{$(\mathcal{P}_{a})$}. Then
$$\sum_{i=1}^{2}\left( \mathcal{N}^{s_i(x,\cdot)}_{a^i_{(x,\cdot)}}u+\beta(x) \widehat{a}^i_{x}(|u|)u\right) =0~~\text{ a.e in   } \R^N\setminus \Omega.$$
\end{prop}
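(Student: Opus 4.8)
The plan is to combine the weak formulation (\ref{N6}) with the Green-type identity (\ref{N7}) established in the preceding Proposition, and then to localize by testing against functions supported in $C\Omega$. First I would insert the identity (\ref{N7}) into the bilinear form $\mathcal{A}_{s}(u,v)$: for each $i=1,2$ it rewrites the double integral over $\R^{2N}\setminus(C\Omega)^2$ as $\int_\Omega v\,(-\Delta)^{s_i(x,\cdot)}_{a^i_{(x,\cdot)}}u\,dx+\int_{C\Omega}v\,\mathcal{N}^{s_i(x,\cdot)}_{a^i(x,\cdot)}u\,dx$. Substituting this together with the two remaining zeroth-order terms, the definition of a weak solution becomes
\begin{equation*}
\sum_{i=1}^2\Big(\int_\Omega v\,(-\Delta)^{s_i(x,\cdot)}_{a^i_{(x,\cdot)}}u\,dx+\int_\Omega \widehat{a}^i_x(|u|)uv\,dx+\int_{C\Omega}v\Big(\mathcal{N}^{s_i(x,\cdot)}_{a^i(x,\cdot)}u+\beta(x)\widehat{a}^i_x(|u|)u\Big)dx\Big)=\lambda\int_\Omega f(x,u)v\,dx
\end{equation*}
for every $v\in X$.

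Next I would restrict the test function to those $v\in X$ that vanish a.e.\ on $\Omega$, extending an arbitrary admissible function on $C\Omega$ by zero inside $\Omega$. For such $v$ every integral carried over $\Omega$ drops out, including the right-hand side $\lambda\int_\Omega f(x,u)v\,dx$, and one is left with
\begin{equation*}
\int_{C\Omega}v(x)\,\bigg[\sum_{i=1}^2\Big(\mathcal{N}^{s_i(x,\cdot)}_{a^i(x,\cdot)}u+\beta(x)\widehat{a}^i_x(|u|)u\Big)\bigg]dx=0.
\end{equation*}
Here I must check that this class of test functions is nonempty and rich enough inside $X$: a function that is zero on $\Omega$ still contributes to the Gagliardo seminorm through the cross terms $\Omega\times C\Omega\subset\R^{2N}\setminus(C\Omega)^2$, so I would verify that, for instance, bounded functions with support in a bounded subset of $C\Omega$ (in particular characteristic functions $\chi_E$ of bounded measurable $E\subset C\Omega$) indeed belong to $X$. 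Finally I would invoke the fundamental lemma of the calculus of variations: since the last displayed identity holds for every admissible $v$ supported in $C\Omega$, and the bracketed quantity $g:=\sum_{i=1}^2(\mathcal{N}^{s_i(x,\cdot)}_{a^i(x,\cdot)}u+\beta\,\widehat{a}^i_x(|u|)u)$ is a fixed measurable function on $C\Omega$, testing against $v=\chi_E$ gives $\int_E g\,dx=0$ for all such $E$, whence $g=0$ a.e.\ in $C\Omega=\R^N\setminus\Omega$, which is exactly the claim.

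The routine substitution and the vanishing of the $\Omega$-integrals are immediate once the Green formula is in hand; the delicate point, and the one I would spend most care on, is the localization step. Concretely, I must guarantee both that the admissible test functions supported in $C\Omega$ form a family rich enough inside $X$ to separate the integrand (despite the nonlocal coupling between $\Omega$ and $C\Omega$ present in the seminorm) and that $\mathcal{N}^{s_i(x,\cdot)}_{a^i(x,\cdot)}u$ is finite and locally integrable for a.e.\ $x\in C\Omega$, so that the passage from ``zero integral against all test functions'' to ``zero a.e.'' is legitimate.
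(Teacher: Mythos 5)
Your proposal is correct and follows essentially the same route as the paper: take test functions $v\in X$ vanishing in $\Omega$, reduce $\mathcal{A}_s(u,v)$ via the symmetry/Green-formula computation to $\int_{C\Omega}v\,\bigl[\sum_{i}(\mathcal{N}^{s_i(x,\cdot)}_{a^i(x,\cdot)}u+\beta\,\widehat{a}^i_x(|u|)u)\bigr]dx=0$, and conclude by the fundamental lemma of the calculus of variations. The only cosmetic difference is the localizing family — the paper tests against $v\in C^\infty_c(\R^N\setminus\Omega)$ while you use characteristic functions $\chi_E$ (for which you should take $E$ bounded with positive distance to $\Omega$, so the cross-term Gagliardo integral over $\Omega\times E$ is manifestly finite; such sets still exhaust $C\Omega$ up to a null set) — which does not change the substance of the argument.
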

\begin{proof}
First, we take $v\in X$ such that $v=0$ in $\Omega$ as a test function in $(\ref{N6})$, and similar calculus to $(\ref{N7})$.  We have 
$$
\begin{aligned}
0=&\mathcal{A}_{s}(u,v)\\
&= \sum_{i=1}^{2} \Big( \dfrac{1}{2}  \int_{\R^{2N}\setminus(C\Omega)^2}  a^i_{(x,y)}\left( \dfrac{|u(x)-u(y)|}{|x-y|^{s_i(x,y)} }\right)\dfrac{u(x)-u(y)}{|x-y|^{s_i(x,y)}}\dfrac{v(x)-v(y)}{|x-y|^{s_i(x,y)}} \dfrac{dxdy}{|x-y|^{N}}\\
&~~+\int_{C \Omega}\beta(x) \widehat{a}^i_{x}(|u|)u vdx\Big)\\
&= \sum_{i=1}^{2} \Big( \int_{\Omega}  \int_{\R^N\setminus\Omega} a^i_{(x,y)}\left( \dfrac{|u(x)-u(y)|}{|x-y|^{s_i(x,y)} }\right)\dfrac{u(x)-u(y)}{|x-y|^{s_i(x,y)}}v(x) \dfrac{dxdy}{|x-y|^{N+s_i(x,y)}}\\
&~~+\int_{C \Omega}\beta(x) \widehat{a}^i_{x}(|u|)u vdx\Big)\\
&= \sum_{i=1}^{2} \Big( \int_{\R^N\setminus\Omega}v(x)\int_{\Omega}   a^i_{(x,y)}\left( \dfrac{|u(x)-u(y)|}{|x-y|^{s_i(x,y)} }\right)\dfrac{u(x)-u(y)}{|x-y|^{s_i(x,y)}} \dfrac{dxdy}{|x-y|^{N+s_i(x,y)}}\\
&~~+\int_{C \Omega}\beta(x) \widehat{a}^i_{x}(|u|)u vdx\Big)\\
&= \sum_{i=1}^{2} \Big( \int_{\R^N\setminus\Omega}v(x)\mathcal{N}^{s_i(x,\cdot)}_{a^i_{(x,\cdot)}}u(x)dx+\int_{C \Omega}\beta(x) \widehat{a}^i_{x}(|u|)u vdx\Big)\\
&= \sum_{i=1}^{2} \Big( \int_{\R^N\setminus\Omega}\left( \mathcal{N}^{s_i(x,\cdot)}_{a^i_{(x,\cdot)}}u(x)dx+\beta(x) \widehat{a}^i_{x}(|u|)u\right)  v(x) dx\Big).\\
\end{aligned} 
$$
This implies that 
$$ \sum_{i=1}^{2} \left( \int_{\R^N\setminus\Omega}\left( \mathcal{N}^{s_i(x,\cdot)}_{a^i_{(x,\cdot)}}u(x)dx+\beta(x) \widehat{a}^i_{x}(|u|)u\right)  v(x) dx\right) =0$$
for any $v\in X$, and $v=0$ in $\Omega$. In particular is true for every $v\in C^\infty_c(\R^N\setminus \Omega)$, and so
$$ \sum_{i=1}^{2} \left(  \mathcal{N}^{s_i(x,\cdot)}_{a^i_{(x,\cdot)}}u+\beta(x) \widehat{a}^i_{x}(|u|)u\right) =0~~\text{ a.e in   } \R^N\setminus \Omega.$$
\end{proof}
    \section{Existence results and proofs}\label{S4}
  The aim of this section is  to prove the existence of a weak solution of \hyperref[P]{$(\mathcal{P}_{a})$}. 
 In what follows, we will work with the modular norm $\|.\|$ and we denote by  $\left( X^*, ||.||_*\right)$         the dual space of $\left( X, ||.||\right)$.\\

    Next, we suppose that  $f : \Omega \times \R \rightarrow  \R$ is a Carath\'eodory function such that       
  \begin{equation}\label{f1}\tag{$f_1$}  |f(x,t)|\leqslant c_1|t|^{q(x)-1},  \end{equation}    
                \begin{equation}\label{f2}\tag{$f_2$} c_2|t|^{q(x)}\leqslant F(x,t):=\int_{0}^{t}f(x,\tau)d\tau,  \end{equation}
for all $x\in \Omega$ and all $t\in \R^N$,  where $c_1$ and $c_2$ are two positive constants, and $q\in C(\overline{\Omega})$ with $1<q^+\leqslant\min\left\lbrace  \varphi_1^- , \varphi_2^- \right\rbrace$, and
\begin{equation}\label{8}
   \lim_{t\rightarrow \infty}\left(\sup\limits_{x\in \overline{\Omega}}\dfrac{|t|^{q^+}}{(\widehat{\varPhi^i}_{x,s_i^-})_*(kt)}\right)=0 ~~~~i=1,2~~~~\forall k>0.
   \end{equation}
\begin{rem}\label{rem1}
     By $(\ref{8})$, we can apply Proposition $\ref{N10}$  we obtain that $X_{i}$ is compactly embedded in $L^{q+}(\Omega)$ for $i=1,2$. That fact combined with the continuous embedding of $L^{q^+}(\Omega)$ in $L^{q(x)}(\Omega)$, ensures that $X_{i}$ is compactly embedded in $L^{q(x)}(\Omega)$ for $i=1,2$.  In  particular, the embedding
     $$X \hookrightarrow L^{q(x)}(\Omega)$$
     is compact.
 \end{rem} 
       For simplicity, we set
   \begin{equation}
   D^{s_i(x,y)}u:=\dfrac{u(x)-u(y)}{|x-y|^{s_i(x,y)}}. \label{h} 
    \end{equation}       
       
   Now, we are ready to state our  existence result. 
      \begin{thm}\label{2.1.}
       Assume $f$ satisfy  \hyperref[f1]{$(f_1)$} and \hyperref[f2]{$(f_2)$}. Then there exist $\lambda_*$ and $\lambda^*$, such that for any $\lambda\in(0,\lambda_*)\cup [\lambda^*,\infty)$, problem \hyperref[P]{$(\mathcal{P}_{a})$} has a nontrivial weak solutions.    \end{thm}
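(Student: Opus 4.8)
The plan is to set up a variational framework in which the weak solutions of \hyperref[P]{$(\mathcal{P}_{a})$} in the sense of (\ref{N6}) are precisely the critical points of the energy functional $I_\lambda:X\to\R$,
\[
I_\lambda(u)=\sum_{i=1}^{2}\left(\frac{1}{2}\int_{\R^{2N}\setminus(C\Omega)^2}\varPhi^i_{x,y}\!\left(\frac{|u(x)-u(y)|}{|x-y|^{s_i(x,y)}}\right)\frac{dxdy}{|x-y|^{N}}+\int_{\Omega}\widehat{\varPhi}^i_x(|u|)\,dx+\int_{C\Omega}\beta(x)\widehat{\varPhi}^i_x(|u|)\,dx\right)-\lambda\int_{\Omega}F(x,u)\,dx.
\]
Writing $\Lambda(u)$ for the bracketed modular part and $\Psi(u)=\int_\Omega F(x,u)\,dx$, we have $I_\lambda=\Lambda-\lambda\Psi$ and $I_\lambda'(u)v=\mathcal{A}_{s}(u,v)-\lambda\int_\Omega f(x,u)v\,dx$. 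First I would check that $I_\lambda\in C^1(X,\R)$: the $\Delta_2$-conditions (\ref{r1})--(\ref{rr1}) make $\Lambda$ finite and continuously G\^ateaux differentiable on all of $X$, while the growth bound \hyperref[f1]{$(f_1)$} together with the compact embedding $X\hookrightarrow L^{q(x)}(\Omega)$ of Remark \ref{rem1} makes $\Psi$ of class $C^1$ with compact, hence weakly continuous, derivative. Thus critical points of $I_\lambda$ coincide with weak solutions.

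Next I would isolate the two quantitative estimates that drive everything, both consequences of Proposition \ref{Nmod.}. On the one hand, for $\|u\|\ge 1$ one has $\Lambda(u)\ge\tfrac12\rho_{s(x,y)}(u)\ge\tfrac12\|u\|^{\min\{\varphi_1^-,\varphi_2^-\}}$, whereas \hyperref[f1]{$(f_1)$} and the embedding give $\Psi(u)\le C\left(1+\|u\|^{q^{+}}\right)$; since $q^{+}\le\min\{\varphi_1^-,\varphi_2^-\}$, the $\Lambda$-term controls the $\Psi$-term and yields coercivity of $I_\lambda$ (in the borderline case $q^+=\min\{\varphi_1^-,\varphi_2^-\}$ this only holds for $\lambda$ below a threshold, which is the origin of $\lambda_*$). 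On the other hand, fixing $w\in X$ with $w\not\equiv 0$ on $\Omega$ and using $(0,1]\ni t\mapsto tw$, Proposition \ref{Nmod.} gives $\Lambda(tw)\le t^{\min\{\varphi_1^-,\varphi_2^-\}}\rho_{s(x,y)}(w)$ while the lower bound \hyperref[f2]{$(f_2)$} gives $\Psi(tw)\ge c_2 t^{q^{+}}\int_\Omega|w|^{q(x)}\,dx$; comparing the two (with $q^{-}=\inf_{\overline\Omega}q$) shows that $I_\lambda(tw)<0=I_\lambda(0)$ for a suitable small $t$ as soon as $\lambda$ exceeds a threshold $\lambda^{*}$ determined by the ratio $\rho_{s(x,y)}(w)/\big(c_2\int_\Omega|w|^{q(x)}dx\big)$. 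I would also record that $\Lambda$ is convex and strongly continuous, hence sequentially weakly lower semicontinuous, and that $\Psi$ is sequentially weakly continuous by the compactness in Remark \ref{rem1}.

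I would then treat the two ranges with the two complementary tools. For $\lambda\in[\lambda^{*},\infty)$, where the nontriviality estimate is automatic, I would invoke the direct method, Theorem \ref{th2}, on $V=X$: coercivity plus weak lower semicontinuity give a global minimizer $u_\lambda$, which is automatically a critical point, and the negative-energy estimate above forces $I_\lambda(u_\lambda)\le I_\lambda(tw)<0=I_\lambda(0)$, so $u_\lambda\neq 0$. For $\lambda\in(0,\lambda_*)$, I would localize with Ekeland's principle, Theorem \ref{th1}: choosing $\rho$ and $\lambda$ small, Proposition \ref{Nmod.} with the exponents $\max\{\varphi_1^+,\varphi_2^+\}$ against $q^{-}$ gives $I_\lambda\ge0$ on $\partial B_\rho$ while a scaled test function keeps $m:=\inf_{\overline B_\rho}I_\lambda<0$, so the infimum over the closed ball is not attained on its boundary; Ekeland then produces a sequence $\{u_n\}\subset\overline B_\rho$ with $I_\lambda(u_n)\to m$ and $I_\lambda'(u_n)\to0$ in $X^{*}$, and reflexivity gives $u_n\rightharpoonup u$ along a subsequence, with $u\neq0$ because the level $m$ is strictly negative.

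The step I expect to be the main obstacle is upgrading this weak convergence to a genuine critical point, i.e. proving that $\Lambda'$ satisfies the $(S_+)$ property: from $u_n\rightharpoonup u$ and $\limsup_n\langle\Lambda'(u_n),u_n-u\rangle\le0$ one must deduce $u_n\to u$ strongly in $X$. In the present double-phase, variable-order Musielak setting there is no single growth exponent and the uniform-convexity constants are not explicit, so the argument has to be carried out at the level of the modular $\rho_{s(x,y)}$, combining the monotonicity of $t\mapsto t\,\varphi^i_{x,y}(t)$, the convexity hypothesis \hyperref[f2.]{$(\varPhi_2)$}, the $\Delta_2$-conditions (\ref{r1})--(\ref{rr1}) and the modular/norm comparison of Proposition \ref{Nmod.} to turn modular convergence into norm convergence. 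Once strong convergence is secured, continuity of $I_\lambda'$ yields $I_\lambda'(u)=0$, so $u$ is the desired nontrivial weak solution, and the union $(0,\lambda_*)\cup[\lambda^{*},\infty)$ is exactly the set of $\lambda$ for which at least one of the two mechanisms applies.
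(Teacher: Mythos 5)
Your proposal follows essentially the same route as the paper: the same energy functional, the same two regimes --- Ekeland's principle (Theorem \ref{th1}) on a small closed ball for $\lambda\in(0,\lambda_*)$ and the direct method (Theorem \ref{th2}) for $\lambda\in[\lambda^*,\infty)$ --- the same modular/norm estimates from Proposition \ref{Nmod.}, and the same crucial compactness step. That step, which you rightly single out as the main obstacle, is exactly the paper's Lemma \ref{4.4.}: if $u_n\rightharpoonup u$ in $X$ and $\limsup_n\langle I_1'(u_n),u_n-u\rangle\leqslant 0$, then $u_n\to u$ strongly. The paper proves it with precisely the ingredients you list: convexity and weak lower semicontinuity of $I_1$ give $I_1(u_n)\to I_1(u)$, then the inequality
$$\frac12 I_1(u_n)+\frac12 I_1(u)-I_1\!\left(\frac{u_n+u}{2}\right)\;\geqslant\; I_1\!\left(\frac{u_n-u}{2}\right),$$
obtained from (\ref{v1}), (\ref{f2.}) via \cite[Lemma 2.1]{Lam}, combined with the modular/norm comparison, produces a contradiction if $\|u_n-u\|\not\to 0$. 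One small ordering slip: in your Ekeland paragraph you assert $u\neq 0$ already at the stage of weak convergence ``because the level $m$ is strictly negative''; weak limits do not preserve the energy level, so nontriviality only follows once strong convergence gives $J_\lambda(u)=m<0$, as you indeed say at the end.

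The one concrete defect is your accounting of the two thresholds in the borderline case $q^+=\min\{\varphi_1^-,\varphi_2^-\}$, which the stated hypothesis $1<q^+\leqslant\min\{\varphi_1^-,\varphi_2^-\}$ formally allows. You remark that in this case coercivity holds only for $\lambda$ below a bound (``the origin of $\lambda_*$''), yet your direct-method branch, which requires coercivity, runs exactly over the large values $\lambda\geqslant\lambda^*$; symmetrically, your own scaling estimate makes the negative infimum on the small ball available only for $\lambda>\lambda^*$, while the Ekeland branch runs over the small values $\lambda<\lambda_*$. As written, neither branch closes when $q^+=\min\{\varphi_1^-,\varphi_2^-\}$. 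The paper sidesteps this by tacitly using the strict inequality $q^+<\min\{\varphi_1^-,\varphi_2^-\}$ (this is what the proofs of Lemmas \ref{lem6} and \ref{lem7} actually invoke), so that coercivity and the estimate $J_\lambda(t\theta)<0$ for small $t$ hold for \emph{every} $\lambda>0$; then $\lambda_*$ arises solely from positivity of $J_\lambda$ on $\partial B_\rho(0)$ (Lemma \ref{lem5}), and $\lambda^*$ arises solely from forcing the fixed function $u_0=t_0$ in $\Omega$, $u_0=0$ outside, to have negative energy, which makes the global minimizer nontrivial. If you restate your argument under this strict inequality, the thresholds play these same roles and your proof coincides with the paper's.
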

           
    For each $\lambda>0$, we define the energy  functional $J_\lambda :  X\longrightarrow \R$ by
    {\small\begin{equation}\label{14.}
    \begin{aligned}
    J_\lambda(u)=&\displaystyle \sum_{i=1}^{2}\Big( \dfrac{1}{2}  \int_{\R^{2N}\setminus(C\Omega)^2} \varPhi^i_{x,y}\left( \dfrac{ |u(x)- u(y)|}{|x-y|^{s_i(x,y)}}\right) \dfrac{dxdy}{|x-y|^N}+\int_{\Omega}\widehat{\varPhi^i}_x\left( |u(x)|\right) dx\\
    &+\int_{C \Omega}\beta(x) \widehat{\varPhi^i}_x\left( |u(x)|\right) dx\Big)-\lambda\int_{\Omega}F(x,u)dx.
        \end{aligned}
    \end{equation}}
         \begin{rem}
    We note that the functional $J_\lambda :  X\longrightarrow \R$ in $(\ref{14.})$ is well
    defined. Indeed, if $u\in X$, then, we have  $u \in  L^{q(x)}(\Omega)$. Hence, by the condition \hyperref[f1]{$(f_1)$},
    $$ |F(x,u)|\leqslant\int_{0}^{u}|f(x,t)|dt=c_1|u|^{q(x)}$$
    and thus, 
    $$\int_{\Omega}|F(x,u)|dx<\infty.$$
         \end{rem}
   We first  establish some basis properties of $J_\lambda$.
   \begin{prop}\label{prop1}
    Assume condition \hyperref[f1]{$(f_1)$} is satisfied. Then, for each $\lambda>0$, $J_\lambda\in C^1\left( X, \R \right)$ with the derivative given by 
  $$
  \begin{aligned}
  \left\langle J'_\lambda(u),v\right\rangle =&\sum_{i=1}^{2}\Big( \dfrac{1}{2}  \int_{\R^{2N}\setminus(C\Omega)^2} a^i_{x,y}(|D^{s_i(x,y)}u|)D^{s_i(x,y)}u D^{s_i(x,y)}vd\mu+\int_{\Omega}\widehat{a}^i_{x}(|u|)u vdx\\
  &+\int_{C \Omega}\beta(x) \widehat{a}^i_{x}(|u|)u vdx\Big)-\lambda\int_{\Omega}f(x,u)vdx
    \end{aligned}
  $$
   for all $u,v \in X$.\end{prop}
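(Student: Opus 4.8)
The plan is to split the functional as $J_\lambda=\sum_{i=1}^{2}\Psi_i-\lambda\,\Lambda$, where $\Psi_i$ gathers the three Musielak integrals carrying the index $i$,
$$\Psi_i(u)=\frac12\int_{\R^{2N}\setminus(C\Omega)^2}\varPhi^i_{x,y}\!\left(|D^{s_i(x,y)}u|\right)d\mu+\int_\Omega\widehat{\varPhi^i}_x(|u|)\,dx+\int_{C\Omega}\beta(x)\,\widehat{\varPhi^i}_x(|u|)\,dx,$$
and $\Lambda(u)=\int_\Omega F(x,u)\,dx$, and to show separately that each $\Psi_i$ and $\Lambda$ is G\^ateaux differentiable with a derivative that depends continuously on $u$; adding these pieces then gives $J_\lambda\in C^1(X,\R)$ with the announced formula.

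For the G\^ateaux derivative of $\Psi_i$ at $u$ in the direction $v$, I would form the difference quotient and differentiate the integrand under the integral sign. Since $\varPhi^i_{x,y}$ is the primitive of $\varphi^i_{x,y}$ and $\varphi^i_{x,y}(\tau)=a^i_{x,y}(|\tau|)\tau$, the chain rule gives $\frac{d}{dt}\varPhi^i_{x,y}(|D^{s_i(x,y)}(u+tv)|)\big|_{t=0}=a^i_{x,y}(|D^{s_i(x,y)}u|)\,D^{s_i(x,y)}u\,D^{s_i(x,y)}v$, and likewise the two local integrands produce $\widehat{a}^i_x(|u|)uv$; these are exactly the terms in the claimed expression for $\langle J_\lambda'(u),v\rangle$. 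To justify passing the limit of the difference quotient inside the integral I would apply the mean value theorem to bound the quotient and use the global $\Delta_2$-condition \eqref{r1}--\eqref{rr1} to produce an $L^1$ majorant, so that dominated convergence applies. That $\langle J_\lambda'(u),\cdot\rangle$ is a bounded linear functional on $X$ then follows from the H\"older-type inequality in the Musielak spaces for the $\Psi_i$ terms, and from the growth bound $(f_1)$ combined with the compact embedding $X\hookrightarrow L^{q(x)}(\Omega)$ of Remark \ref{rem1} for the reaction term.

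The delicate point is the continuity of $u\mapsto J_\lambda'(u)$ from $X$ into $X^*$. Given $u_n\to u$ in $X$, Proposition \ref{Nmod.} yields $\rho_{s(x,y)}(u_n-u)\to 0$, and after passing to a subsequence I may assume that $D^{s_i(x,y)}u_n\to D^{s_i(x,y)}u$ a.e. on $\R^{2N}\setminus(C\Omega)^2$ and that $u_n\to u$ a.e. on $\Omega$ and $C\Omega$. By continuity of $a^i$ the Nemytskii maps $w\mapsto a^i_{x,y}(|w|)w$ then converge a.e.; to upgrade this a.e.\ convergence to convergence of the corresponding functionals in the conjugate Musielak space $L_{\overline{\varPhi^i}_{x,y}}$, and hence to convergence of $\Psi_i'(u_n)$ in $X^*$, I would invoke the equi-integrability supplied by the $\Delta_2$-condition together with Vitali's convergence theorem, treating the two local integrals in the same way. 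For the reaction part, $(f_1)$ makes the Nemytskii operator generated by $f$ continuous from $L^{q(x)}(\Omega)$ into the dual space $L^{q'(x)}(\Omega)$, and the compactness of $X\hookrightarrow L^{q(x)}(\Omega)$ gives $\Lambda'(u_n)\to\Lambda'(u)$ in $X^*$. The standard subsequence argument (every subsequence has a sub-subsequence converging to the same limit) removes the passage to subsequences, and combining the modular and reaction estimates establishes that $J_\lambda'$ is continuous, completing the proof that $J_\lambda\in C^1(X,\R)$.
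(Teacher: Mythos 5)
Your proposal is correct and is essentially the paper's own route: the paper proves Proposition \ref{prop1} only by referring to \cite[Proposition 3.1]{benkirane}, whose argument is precisely the scheme you outline (G\^ateaux differentiability of the modular and reaction terms via the mean value theorem, the $\Delta_2$-condition and dominated convergence; boundedness of the derivative via the Musielak--H\"older inequality; continuity of $u\mapsto J'_\lambda(u)$ via a.e.\ convergence, Vitali-type arguments and the compact embedding $X\hookrightarrow L^{q(x)}(\Omega)$). No genuine gap: your outline uses only hypotheses available in the paper, namely \eqref{v1}, \eqref{r1}--\eqref{rr1} and \hyperref[f1]{$(f_1)$}.
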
 
   Proof of this Proposition is similar to \cite[Proposition 3.1]{benkirane}.\\
   
    Now, define the functionals $I_i : X\longrightarrow \R$ $i=1,2$ by 
   $$
   \begin{aligned}
       I_1(u)=&\displaystyle \sum_{i=1}^{2}\Big( \dfrac{1}{2}  \int_{\R^{2N}\setminus(C\Omega)^2} \varPhi^i_{x,y}\left( \dfrac{ |u(x)- u(y)|}{|x-y|^{s_i(x,y)}}\right) \dfrac{dxdy}{|x-y|^N}+\int_{\Omega}\widehat{\varPhi^i}_x\left( |u(x)|\right) dx\\
       &+\int_{C \Omega}\beta(x) \widehat{\varPhi^i}_x\left( |u(x)|\right) dx\Big).
           \end{aligned}$$
   and
   $$I_2(u)= \int_{\Omega}F(x,u)dx.$$
   \begin{prop}\label{lem3}
  The functional $J_\lambda$ is weakly lower semi continuous.
   \end{prop}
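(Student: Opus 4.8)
The plan is to exploit the natural splitting $J_\lambda = I_1 - \lambda I_2$ and to treat the two pieces separately: I would show that $I_1$ is (sequentially) weakly lower semicontinuous, while $I_2$ is (sequentially) weakly continuous. The conclusion is then immediate, since the sum of a weakly lower semicontinuous functional and a weakly continuous one is again weakly lower semicontinuous. Throughout, I fix an arbitrary sequence $u_n \rightharpoonup u$ weakly in $X$ and aim to verify $\liminf_{n\to\infty} J_\lambda(u_n) \geq J_\lambda(u)$.

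For $I_1$, the first observation is that it is convex. Indeed, each $\varPhi^i_{x,y}(t)=\int_0^t \varphi^i_{x,y}(\tau)\,d\tau$ is a convex Musielak function, because $\varphi^i_{x,y}$ is increasing; moreover the maps $u\mapsto D^{s_i(x,y)}u$ and $u\mapsto u$ are linear, so each integrand appearing in $I_1$ is a convex function of $u$, and integration preserves convexity. Hence $I_1$ is convex on $X$. Next, $I_1$ is strongly continuous: this follows from the equivalence between the modular $\rho_{s(x,y)}$ and the norm $\|\cdot\|$ recorded in Proposition \ref{Nmod.}, together with the $\Delta_2$-conditions (\ref{r1})--(\ref{rr1}), which guarantee that norm convergence forces convergence of the modular. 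A convex, strongly continuous functional on a Banach space is weakly lower semicontinuous (Mazur's lemma), so $I_1$ is weakly lower semicontinuous; in particular $\liminf_{n\to\infty} I_1(u_n)\geq I_1(u)$.

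The crux of the argument is the weak continuity of $I_2(u)=\int_\Omega F(x,u)\,dx$, and this is where I expect the main obstacle. Here I would invoke the compact embedding $X\hookrightarrow L^{q(x)}(\Omega)$ from Remark \ref{rem1}: from $u_n\rightharpoonup u$ in $X$ I obtain $u_n\to u$ strongly in $L^{q(x)}(\Omega)$, hence, passing to a subsequence, $u_n\to u$ a.e. in $\Omega$ with a dominating function $|u_n|\leq h$ for some $h\in L^{q(x)}(\Omega)$. Since $f$ is a Carath\'eodory function, $F(\cdot,u_n)\to F(\cdot,u)$ a.e. in $\Omega$, while the growth bound furnished by \hyperref[f1]{$(f_1)$}, namely $|F(x,t)|\leq c_1|t|^{q(x)}$, yields the integrable domination $|F(x,u_n)|\leq c_1 h^{q(x)}\in L^1(\Omega)$. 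The generalized dominated convergence theorem then gives $I_2(u_n)\to I_2(u)$ along the subsequence; as the limit $I_2(u)$ is independent of the chosen subsequence, the full sequence converges, so $I_2$ is weakly continuous. It is precisely the compactness of the embedding (not mere continuity) that is indispensable here, since it is what converts weak convergence in $X$ into strong convergence in $L^{q(x)}(\Omega)$ and thereby lets me pass to the limit in the sign-indefinite term $-\lambda I_2$.

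Combining the two parts, for $u_n\rightharpoonup u$ in $X$ I conclude
$$
\liminf_{n\to\infty} J_\lambda(u_n)=\liminf_{n\to\infty}\bigl(I_1(u_n)-\lambda I_2(u_n)\bigr)\geq \liminf_{n\to\infty} I_1(u_n)-\lambda\lim_{n\to\infty} I_2(u_n)\geq I_1(u)-\lambda I_2(u)=J_\lambda(u),
$$
which is exactly the weak lower semicontinuity of $J_\lambda$.
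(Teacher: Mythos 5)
Your proof is correct, and it follows the same decomposition $J_\lambda=I_1-\lambda I_2$ as the paper, but the two halves are handled somewhat differently. For $I_1$, the paper also starts from convexity, but instead of your Mazur-lemma route (convex $+$ strongly continuous $\Rightarrow$ weakly lower semicontinuous) it uses the differentiability of $I_1$ and the subgradient inequality $I_1(u_n)-I_1(u)\geqslant \left\langle I_1'(u),u_n-u\right\rangle$, letting $n\to\infty$ and using $u_n\rightharpoonup u$; both arguments are standard and equally valid, yours avoiding any appeal to $I_1'$. The real divergence is in $I_2$: the paper disposes of it in one line, asserting that since $I_2\in C^1\left(X,\|\cdot\|\right)$ one has $\int_{\Omega}F(x,u_n)dx\to\int_{\Omega}F(x,u)dx$ along the weakly convergent sequence. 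As stated this is a non sequitur --- continuity (even $C^1$-smoothness) with respect to the norm topology does not imply sequential weak continuity --- so the paper's proof has a genuine gap at exactly the point you identified as the crux. Your argument supplies the missing justification: the compact embedding $X\hookrightarrow L^{q(x)}(\Omega)$ of Remark \ref{rem1} upgrades $u_n\rightharpoonup u$ in $X$ to $u_n\to u$ in $L^{q(x)}(\Omega)$, and then a.e. convergence plus the domination $|F(x,u_n)|\leqslant c_1|u_n|^{q(x)}\leqslant c_1 h^{q(x)}\in L^1(\Omega)$ (from \hyperref[f1]{$(f_1)$}) and the subsequence--subsequence argument give $I_2(u_n)\to I_2(u)$ for the whole sequence. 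So your write-up is not only correct but strictly more complete than the paper's; the only cosmetic caveat is that the strong continuity of $I_1$ (via the $\Delta_2$-condition and the modular--norm relations of Proposition \ref{Nmod.}) is itself worth a line of proof, though it is standard in Musielak--Orlicz theory.
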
  
    \begin{proof} First, note that $I_1$
       is lower semi-continuous in the weak topology of $X$. Indeed, 
        since $\varPhi_{x,y}$ is a convex function so $I_1$ is also convex. Then, let $\left\lbrace u_n\right\rbrace \subset X$ with $u_n\rightharpoonup u$ weakly in $ X$, then by convexity of $I_1$ we have 
     $$I_1(u_n)-I_1(u)\geqslant \left\langle I_1'(u),u_n-u\right\rangle,$$ 
          	and hence, we obtain $$I_1(u)\leqslant \liminf I_1(u_n),$$ that is, the map $I_1$
    is  weakly lower semi continuous.          
  On the other hand, since $I_2\in  C^1\left( X, \|.\|\right),$ we have
          $$\lim\limits_{n\rightarrow \infty}\int_{\Omega}F(x,u_n)dx=\int_{\Omega}F(x,u)dx.$$
         Thus, we find 
          $$J_\lambda(u)\leqslant \liminf J_\lambda(u_n).$$
          Therefore, $J_\lambda$ is weakly lower semi continuous and Proposition $\ref{lem3}$ is verified. \end{proof}
      \begin{lem}\label{4.4.}
        Assume that the sequence $\left\lbrace u_n\right\rbrace $ converges weakly to $u$ in $X$ and 
        \begin{equation}\label{35.}
        \limsup_{n\rightarrow \infty}\left\langle I_1'(u_n),u_n-u\right\rangle \leqslant 0.
        \end{equation}
        Then the sequence $\left\lbrace u_n\right\rbrace$  is convergence strongly to $u$ in $X$.
        \end{lem}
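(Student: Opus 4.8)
The statement to prove (Lemma~\ref{4.4.}) is an $(S_+)$-type property: weak convergence $u_n \rightharpoonup u$ together with $\limsup_n \langle I_1'(u_n), u_n - u\rangle \leq 0$ forces strong convergence $u_n \to u$ in $X$. My approach is the standard monotonicity-plus-modular argument adapted to the Musielak setting. First I would record the explicit form of $\langle I_1'(u_n), u_n-u\rangle$ via Proposition~\ref{prop1} (applied to the gradient of $I_1$, i.e.\ without the $\lambda f$ term), splitting it into the nonlocal Gagliardo piece on $\R^{2N}\setminus(C\Omega)^2$ plus the two local pieces on $\Omega$ and on $C\Omega$ weighted by $\beta$.

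\textbf{Key steps.} The core mechanism is the strict monotonicity of the operator generated by $I_1'$. Since each $\varPhi^i_{x,y}$ is convex (and by \eqref{f2.} the map $t\mapsto \varPhi^i_{x,y}(\sqrt t)$ is convex, giving uniform convexity of the space via the first Theorem), the vector field $t\mapsto a^i_{x,y}(|t|)t = \varphi^i_{x,y}(t)$ is strictly monotone, so one has the elementary inequality
\begin{equation*}
\bigl(\varphi^i_{x,y}(\xi)-\varphi^i_{x,y}(\eta)\bigr)(\xi-\eta) \geq 0
\end{equation*}
for all $\xi,\eta$, with equality only when $\xi=\eta$. I would therefore write
\begin{equation*}
\langle I_1'(u_n),u_n-u\rangle - \langle I_1'(u),u_n-u\rangle = \langle I_1'(u_n)-I_1'(u),\, u_n-u\rangle \geq 0 .
\end{equation*}
Because $u_n\rightharpoonup u$ weakly and $I_1'(u)\in X^*$ is a fixed functional, $\langle I_1'(u),u_n-u\rangle \to 0$; combining this with the hypothesis \eqref{35.} yields $\limsup_n\langle I_1'(u_n)-I_1'(u),u_n-u\rangle \leq 0$, while monotonicity gives the reverse $\liminf \geq 0$. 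Hence
\begin{equation*}
\lim_{n\to\infty}\langle I_1'(u_n)-I_1'(u),\,u_n-u\rangle = 0 .
\end{equation*}
This limit controls a sum of three nonnegative integrals (the nonlocal term and the two local terms), each of which must therefore tend to zero.

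\textbf{From vanishing monotonicity gap to strong convergence.} The next step is to convert the vanishing of these integrals into convergence of the modular $\rho_{s(x,y)}(u_n-u)\to 0$. Here I would invoke the uniform convexity of $X$ (guaranteed by the first Theorem under \eqref{v1} and \eqref{f2.}): a convex, $C^1$, uniformly convex functional generates an operator that is not merely monotone but satisfies an $(S_+)$ condition, so that $\langle I_1'(u_n)-I_1'(u),u_n-u\rangle\to 0$ with $u_n\rightharpoonup u$ already forces $u_n\to u$ strongly. Concretely, one uses a Clarkson-type / Simon-type inequality for the integrand $\varPhi^i_{x,y}$ to bound the modular of the difference $D^{s_i(x,y)}(u_n-u)$ (and the local differences) from above by the monotonicity gap, then passes from modular convergence to norm convergence through Proposition~\ref{Nmod.}, since $\rho(u_n-u)\to 0$ implies $\|u_n-u\|\to 0$.

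\textbf{Main obstacle.} The delicate point is precisely the passage from the monotonicity gap to modular convergence in the \emph{variable-order, Musielak} setting, where the classical Clarkson inequalities of $L^p$ are unavailable and must be replaced by convexity estimates uniform in $(x,y)$. The $\Delta_2$-condition \eqref{r1}–\eqref{rr1} and the bounds \eqref{v1} are the tools that make these estimates uniform, so I expect the bulk of the real work to be establishing the quantitative inequality linking $\bigl(\varphi^i_{x,y}(\xi)-\varphi^i_{x,y}(\eta)\bigr)(\xi-\eta)$ to $\varPhi^i_{x,y}(|\xi-\eta|)$ (a ``$\Delta_2$-uniform convexity'' estimate), after which the reflexivity and the modular–norm equivalence of Proposition~\ref{Nmod.} finish the argument routinely.
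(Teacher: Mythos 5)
Your route --- reduce to a vanishing monotonicity gap $\langle I_1'(u_n)-I_1'(u),u_n-u\rangle\to 0$, then convert that into modular convergence --- is genuinely different from the paper's argument and can in fact be completed, but as written it has a gap exactly at the step you yourself flag as the main obstacle, and the tools you nominate for closing it are the wrong ones. The pointwise inequality you want, bounding $\bigl(\varphi^i_{x,y}(\xi)-\varphi^i_{x,y}(\eta)\bigr)(\xi-\eta)$ from below by $\varPhi^i_{x,y}(|\xi-\eta|)$ (up to constants), does \emph{not} follow from \eqref{v1} and the $\Delta_2$-conditions: for the model function $\varPhi(t)=t^{p}$ with $1<p<2$ both hold, yet $\bigl(|\xi|^{p-2}\xi-|\eta|^{p-2}\eta\bigr)(\xi-\eta)\geqslant c\,|\xi-\eta|^{p}$ is false (take $\eta=\xi+1$ and let $\xi\to\infty$: the left side decays like $|\xi|^{p-2}$). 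What saves your plan is precisely \eqref{f2.}, through Lamperti's inequality \cite[Lemma 2.1]{Lam}: if $t\mapsto\varPhi^i_{x,y}(\sqrt{t})$ is convex, then
\begin{equation*}
\tfrac12\varPhi^i_{x,y}(|\xi|)+\tfrac12\varPhi^i_{x,y}(|\eta|)-\varPhi^i_{x,y}\Bigl(\Bigl|\tfrac{\xi+\eta}{2}\Bigr|\Bigr)\;\geqslant\;\varPhi^i_{x,y}\Bigl(\tfrac{|\xi-\eta|}{2}\Bigr),
\end{equation*}
and combining this with the two subgradient inequalities at the midpoint, $\varPhi^i_{x,y}\bigl(\bigl|\tfrac{\xi+\eta}{2}\bigr|\bigr)\geqslant\varPhi^i_{x,y}(|\xi|)+\varphi^i_{x,y}(\xi)\,\tfrac{\eta-\xi}{2}$ and the same with $\xi,\eta$ exchanged, gives
\begin{equation*}
\bigl(\varphi^i_{x,y}(\xi)-\varphi^i_{x,y}(\eta)\bigr)(\xi-\eta)\;\geqslant\;4\,\varPhi^i_{x,y}\Bigl(\tfrac{|\xi-\eta|}{2}\Bigr).
\end{equation*}
With this estimate your argument closes: each of the nonnegative gap integrals vanishes, hence $\rho_{s(x,y)}\bigl(\tfrac{u_n-u}{2}\bigr)\to 0$, and Proposition \ref{Nmod.} converts modular convergence into norm convergence. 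One more caution: do not invoke an abstract ``$(S_+)$ property of operators generated by uniformly convex functionals'' as known --- under monotonicity and weak convergence, your gap condition is equivalent to the hypothesis \eqref{35.}, so such a citation is circular; the pointwise estimate above is what must actually be proved.

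For comparison, the paper never forms the monotonicity gap. It uses convexity of $I_1$, namely $I_1(u)\geqslant I_1(u_n)-\langle I_1'(u_n),u_n-u\rangle$, together with \eqref{35.} and weak lower semicontinuity to conclude $I_1(u_n)\to I_1(u)=c$ along a subsequence; then, assuming $\bigl\|\tfrac{u_n-u}{2}\bigr\|\geqslant\tfrac{\varepsilon_0}{2}$ for a subsequence, it applies the same Lamperti inequality once, globally, at the level of the functional,
\begin{equation*}
\tfrac12 I_1(u_n)+\tfrac12 I_1(u)-I_1\Bigl(\tfrac{u_n+u}{2}\Bigr)\;\geqslant\;I_1\Bigl(\tfrac{u_n-u}{2}\Bigr)\;\geqslant\;\varepsilon_0^{\varphi_1^\pm}+\varepsilon_0^{\varphi_2^\pm}>0,
\end{equation*}
where the last bound comes from Proposition \ref{Nmod}; passing to the limit contradicts $c=I_1(u)\leqslant\liminf_n I_1\bigl(\tfrac{u_n+u}{2}\bigr)$. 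So both proofs ultimately rest on \eqref{f2.} via Lamperti's lemma: the paper uses it once on $I_1$ and finishes by contradiction with weak lower semicontinuity, whereas your route needs it pointwise in the integrand but then yields the slightly stronger, quantitative conclusion $\rho_{s(x,y)}\bigl(\tfrac{u_n-u}{2}\bigr)\to 0$ directly.
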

                         \begin{proof} Since $u_n$ converges weakly to $u$ in $X$, then $\left\lbrace ||u_n||\right\rbrace $ is a bounded sequence of real numbers. Then by Proposition $\ref{Nmod}$, we deduce that $\left\lbrace I_1(u_n)\right\rbrace$ is bounded. So for a subsequence, we deduce that, 
              $$I_1(u_n)\longrightarrow c.$$
              Or since $I_1$ is weak lower semi continuous, we get 
              $$I_1(u)\leqslant \liminf_{n\rightarrow \infty}I_1(u_n)=c.$$
     On the other hand, by the convexity of $I_1$, we have 
     $$I_1(u)\geqslant I_1(u_n)+\left\langle I_1'(u_n),u_n-u\right\rangle .$$   
     Next, by the hypothesis  $(\ref{35.})$, we conclude that $$I_1(u)=c.$$
     Since $\left\lbrace \dfrac{u_n+u}{2} \right\rbrace $ converges weakly to $u$ in $X$, so since $I_1$ is  sequentially weakly lower semicontinuous :
     \begin{equation}\label{32.}
     c=I_1(u)\leqslant \liminf_{n\rightarrow \infty}I_1\left( \dfrac{u_n+u}{2}\right). \end{equation}
      We assume by  contradiction that $\left\lbrace u_n\right\rbrace$ does not converge to $u$ in $X$. Hence,  there exist a subsequence of $\left\lbrace u_n\right\rbrace $, still denoted by $\left\lbrace u_n\right\rbrace $ and there exits $\varepsilon_0>0$ such that 
        $$\Bigg|\Bigg|\dfrac{u_n-u}{2}\Bigg|\Bigg|\geqslant \dfrac{\varepsilon_0}{2},$$
        by Proposition $\ref{Nmod}$, we have
        $$I_1\left( \dfrac{u_n-u}{2}\right) \geqslant \varepsilon_0^{\varphi_1^\pm}+\varepsilon_0^{\varphi_2^\pm}.$$
        On the other hand, by the conditions (\ref{v1}) and (\ref{f2.}), we can apply \cite[Lemma 2.1]{Lam}  in order to obtain         \begin{equation}\label{33.}
        \dfrac{1}{2}I_1(u_n)+\dfrac{1}{2}I_1(u)-I_1\left( \dfrac{u_n+u}{2}\right) \geqslant I_1\left( \dfrac{u_n-u}{2}\right) \geqslant \varepsilon_0^{\varphi_1^\pm}+\varepsilon_0^{\varphi_2^\pm}.
        \end{equation}
        It follows from $(\ref{33.})$ that
        \begin{equation}\label{34.}
        I_1(u)-(\varepsilon_0^{\varphi_1^\pm}+\varepsilon_0^{\varphi_2^\pm}) \geqslant \limsup_{n\rightarrow \infty}I_1\left( \dfrac{u_n+u}{2}\right),
        \end{equation}
        from $(\ref{32.})$ and $(\ref{34.})$ we obtain a contradiction. This shows that $\left\lbrace u_n\right\rbrace$ converges strongly to $u$ in $X$.
           \end{proof}
   \begin{lem}\label{lem5}
   Assume the hypotheses of Theorem $\ref{2.1.}$  are fulfilled. Then there exist $\rho, \alpha>0$ and $\lambda_*>0$ such that for any $\lambda\in (0,\lambda_*),~~J_\lambda(u)\geqslant \alpha>0$ for any $u\in X$ with $||u||=\rho$.
   \end{lem}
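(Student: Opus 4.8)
The plan is to estimate $J_\lambda$ from below on the sphere $\{\|u\|=\rho\}$ by splitting it into the ``elliptic'' part $I_1(u)$, bounded below through the modular $\rho_{s(x,y)}$, and the reaction part $\lambda\int_\Omega F(x,u)\,dx$, bounded above through the subcritical embedding $X\hookrightarrow L^{q(x)}(\Omega)$ of Remark~\ref{rem1}. The decisive feature is that the subcriticality hypothesis $q^+\leqslant\min\{\varphi_1^-,\varphi_2^-\}$ forces the exponent produced by $I_1$ to be strictly larger than the one produced by $F$, so that on a small fixed sphere the elliptic term wins once $\lambda$ is taken small.

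First I would record the lower bound for the elliptic part. Since $\beta\geqslant0$ and each nonlocal Gagliardo term enters $I_1$ with the factor $\tfrac{1}{2}$ while the two remaining nonnegative integrals enter with factor $1$, one has $I_1(u)\geqslant\tfrac{1}{2}\,\rho_{s(x,y)}(u)$ for every $u\in X$. Restricting to $\|u\|=\rho$ with $\rho\in(0,1)$ (to be fixed below) and applying relation (\ref{Nmod2.}) of Proposition~\ref{Nmod.} gives $\rho_{s(x,y)}(u)\geqslant\|u\|^{m^+}=\rho^{m^+}$, where I abbreviate $m^+:=\max\{\varphi_1^+,\varphi_2^+\}$. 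Hence $I_1(u)\geqslant\tfrac{1}{2}\rho^{m^+}$ on this sphere.

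Next I would estimate the reaction term. Integrating \hyperref[f1]{$(f_1)$} yields $|F(x,u)|\leqslant c_1|u|^{q(x)}$, whence $\big|\int_\Omega F(x,u)\,dx\big|\leqslant c_1\int_\Omega|u|^{q(x)}\,dx$. By the continuity of the embedding $X\hookrightarrow L^{q(x)}(\Omega)$ there is $c_0>0$ with $\|u\|_{L^{q(x)}(\Omega)}\leqslant c_0\|u\|$; choosing $\rho$ small enough that also $c_0\rho<1$ puts us in the regime $\|u\|_{L^{q(x)}}<1$, so the standard modular--norm inequality in variable-exponent Lebesgue spaces gives $\int_\Omega|u|^{q(x)}\,dx\leqslant\|u\|_{L^{q(x)}}^{q^-}\leqslant c_0^{q^-}\rho^{q^-}$. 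Thus $\lambda\int_\Omega F(x,u)\,dx\leqslant\lambda c_1 c_0^{q^-}\rho^{q^-}$.

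Combining the two estimates, for every $u$ with $\|u\|=\rho$,
$$J_\lambda(u)\;\geqslant\;\tfrac{1}{2}\rho^{m^+}-\lambda c_1 c_0^{q^-}\rho^{q^-}\;=\;\rho^{m^+}\Big(\tfrac{1}{2}-\lambda c_1 c_0^{q^-}\rho^{\,q^--m^+}\Big).$$
The growth gap $q^-\leqslant q^+\leqslant\min\{\varphi_1^-,\varphi_2^-\}<\max\{\varphi_1^+,\varphi_2^+\}=m^+$ ensures $m^+-q^->0$, so I fix any $\rho\in(0,1)$ with $c_0\rho<1$ and set $\lambda_*:=\dfrac{\rho^{m^+-q^-}}{4c_1 c_0^{q^-}}$; then for every $\lambda\in(0,\lambda_*)$ the bracket exceeds $\tfrac{1}{4}$, giving $J_\lambda(u)\geqslant\tfrac{1}{4}\rho^{m^+}=:\alpha>0$ on the sphere, as required. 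The only delicate point is this exponent bookkeeping: one must stay in the regime $\|u\|<1$ and $\|u\|_{L^{q(x)}}<1$ so that both Proposition~\ref{Nmod.} and the variable-exponent modular inequality point the useful way, and then exploit the strict gap $q^-<m^+$ to make $\lambda_*$ (which depends on the already-fixed $\rho$) positive so that the argument closes.
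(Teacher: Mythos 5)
Your proof is correct and follows essentially the same route as the paper's: split $J_\lambda$ into $I_1$ minus the reaction term, bound $I_1$ from below on a small sphere via the modular--norm comparison of Proposition \ref{Nmod.}, bound $\int_\Omega F(x,u)\,dx$ from above via $(f_1)$ together with the embedding $X\hookrightarrow L^{q(x)}(\Omega)$, and then define $\lambda_*$ in terms of the fixed radius $\rho$. Your version is in fact slightly more careful than the paper's (you keep the factor $\tfrac12$ in front of the Gagliardo terms, use the constant $c_1$ coming from $(f_1)$ where the paper writes $c_2$, and pin down the exponent $q^-$ where the paper writes $q^{\pm}$); the one cosmetic remark is that the strict gap $q^-<\max\left\lbrace \varphi_1^+,\varphi_2^+\right\rbrace$ you invoke at the end is not actually needed, since $\lambda_*=\rho^{m^+-q^-}/(4c_1c_0^{q^-})$ is positive whatever the sign of $m^+-q^-$.
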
   
                \begin{proof}  Since $X$ is continuously embedded in $L^{q(x)}(\Omega)$. Then there exists a positive constant $c>0$ such that 
     \begin{equation}\label{28}
     ||u||_{q(x)}\leqslant c||u|| ~~\forall u\in X.
     \end{equation}   
    We fix $\rho\in (0,1)$ such that $\rho<\dfrac{1}{c}$. Then relation $(\ref{28})$ implies that for any $u\in X$ with $||u||=\rho$ : 
    $$\begin{aligned}
    J_\lambda(u)&\geqslant ||u||^{\max\left\lbrace \varphi_1^+, \varphi_2^+\right\rbrace}-\lambda c_2 c^{q^\pm}||u||^{q^\pm}\\
    &=\rho^{q^\pm}\left( \rho^{\max\left\lbrace \varphi_1^+, \varphi_2^+\right\rbrace-{q^\pm}}-\lambda c^{q^\pm} c_2\right).
    \end{aligned} 
     $$
   By the above inequality, we remark if we define 
   \begin{equation}\label{29}
   \lambda_*=\dfrac{\rho^{\max\left\lbrace \varphi_1^+, \varphi_2^+\right\rbrace-{q^\pm}}}{2c_2 c^{q^\pm}}.
   \end{equation}    
     Then for any $u\in X$ with $||u||=\rho$, there exists $\alpha=\dfrac{\rho^{\max\left\lbrace \varphi_1^+, \varphi_2^+\right\rbrace}}{2}>0$ such that 
     $$J_\lambda(u)\geqslant \alpha>0,~~\forall \lambda\in (0,\lambda_*).$$
     The proof of Lemma $\ref{lem5}$ is complete.           \end{proof}
    \begin{lem}\label{lem6}
    Assume the hypotheses of Theorem $\ref{2.1.}$  are fulfilled. Then there exists $\theta\in X$ such that $\theta>0$ and $J_\lambda(t\theta)<0$ for $t>0$ small enough.
    \end{lem}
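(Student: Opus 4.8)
The plan is to produce a fixed positive direction $\theta\in X$ and then compare, for small $t>0$, the convex ``elliptic'' part $I_1(t\theta)$ with the nonlinear part $\lambda\int_\Omega F(x,t\theta)\,dx$. Since the modular terms making up $I_1$ are controlled by the exponents $\varphi_i^-$ while $F$ grows like $t^{q(x)}$ with $q^+\leqslant\min\{\varphi_1^-,\varphi_2^-\}$, the nonlinear contribution dominates as $t\searrow 0$ and forces $J_\lambda(t\theta)<0$. First I would fix the direction: by Theorem \ref{TT} together with the embeddings of Section \ref{S3} one may pick $\theta\in C_0^\infty(\Omega)\subset X$ with $\theta\geqslant 0$, $\theta\not\equiv 0$ (indeed strictly positive on a subdomain); then $\rho_{s_i(x,y)}(\theta)<\infty$ and, by Remark \ref{rem1}, $K:=\int_\Omega|\theta|^{q(x)}\,dx\in(0,\infty)$.

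Next I would estimate $I_1(t\theta)$ from above for $t\in(0,1)$. Integrating the two-sided bound \eqref{v1} yields the sub-homogeneity inequalities $\varPhi^i_{x,y}(t\sigma)\leqslant t^{\varphi_i^-}\varPhi^i_{x,y}(\sigma)$ and $\widehat{\varPhi}^i_x(t\sigma)\leqslant t^{\varphi_i^-}\widehat{\varPhi}^i_x(\sigma)$ for all $\sigma\geqslant 0$ and $t\in(0,1)$. Applying these to the integrals defining $I_1$ and writing $A_i$ for the (finite, $\theta$-dependent) value of the modular block of index $i$, I obtain
\[
I_1(t\theta)\leqslant t^{\varphi_1^-}A_1+t^{\varphi_2^-}A_2\leqslant C\,t^{\min\{\varphi_1^-,\varphi_2^-\}},\qquad C:=A_1+A_2,
\]
where the last step uses that $t\mapsto t^a$ is decreasing on $(0,1)$.

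Then, by hypothesis \eqref{f2} and $q(x)\leqslant q^+$ (so that $t^{q(x)}\geqslant t^{q^+}$ for $t\in(0,1)$),
\[
\lambda\int_\Omega F(x,t\theta)\,dx\geqslant\lambda c_2\int_\Omega t^{q(x)}|\theta|^{q(x)}\,dx\geqslant\lambda c_2\,K\,t^{q^+}.
\]
Combining the two bounds gives
\[
J_\lambda(t\theta)=I_1(t\theta)-\lambda\int_\Omega F(x,t\theta)\,dx\leqslant t^{q^+}\Big(C\,t^{\min\{\varphi_1^-,\varphi_2^-\}-q^+}-\lambda c_2 K\Big).
\]
Since $q^+\leqslant\min\{\varphi_1^-,\varphi_2^-\}$, the exponent $\min\{\varphi_1^-,\varphi_2^-\}-q^+\geqslant 0$, so the bracket converges to $-\lambda c_2 K<0$ as $t\searrow 0$; hence $J_\lambda(t\theta)<0$ for all sufficiently small $t>0$, which is the claim.

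The hard part will be the borderline case $q^+=\min\{\varphi_1^-,\varphi_2^-\}$, in which the elliptic and nonlinear terms scale with the same power $t^{q^+}$: there negativity of the bracket requires $\lambda c_2 K>C$, i.e. $\lambda$ large, precisely the regime captured by the threshold $\lambda^*$ in Theorem \ref{2.1.}. The remaining technical points are the rigorous derivation of the sub-homogeneity scaling of the Musielak functions from \eqref{v1}, and verifying that $A_1$, $A_2$ and $K$ are finite for the chosen $\theta$, both of which follow from $\theta\in X$ and the compact embedding in Remark \ref{rem1}.
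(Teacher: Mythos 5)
Your proposal follows essentially the same route as the paper's proof: fix a smooth, compactly supported bump $\theta$, bound the modular part $I_1(t\theta)$ above by $C\,t^{\min\{\varphi_1^-,\varphi_2^-\}}$ for $t\in(0,1)$, bound the nonlinear part below by $\lambda c_2 K t^{q^+}$ using \eqref{f2}, and compare the two powers of $t$. (The paper obtains the first bound from Proposition \ref{Nmod.} together with homogeneity of the norm, $I_1(t\theta)\leqslant \|t\theta\|^{\min\{\varphi_1^-,\varphi_2^-\}}=t^{\min\{\varphi_1^-,\varphi_2^-\}}\|\theta\|^{\min\{\varphi_1^-,\varphi_2^-\}}$, whereas you scale the modulars directly via the sub-homogeneity consequence of \eqref{v1}; the two devices are interchangeable.) Under the strict inequality $q^+<\min\{\varphi_1^-,\varphi_2^-\}$ --- which is exactly what the paper's own proof invokes (``Since $\min\{\varphi_1^-,\varphi_2^-\}>q^+$ \dots'') --- your argument is complete and correct.

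The genuine gap is your treatment of the borderline case $q^+=\min\{\varphi_1^-,\varphi_2^-\}$, which the stated hypothesis $q^+\leqslant\min\{\varphi_1^-,\varphi_2^-\}$ permits. First, your main derivation is false there: when the exponent $\min\{\varphi_1^-,\varphi_2^-\}-q^+$ equals $0$, the bracket is the constant $C-\lambda c_2K$, which does not converge to $-\lambda c_2K$ and has no definite sign when $\lambda$ is small. Second, your proposed repair --- that this case is ``precisely the regime captured by $\lambda^*$'' --- is backwards. In the proof of Theorem \ref{2.1.}, Lemma \ref{lem6} is used only in the Ekeland branch, i.e.\ precisely for $\lambda\in(0,\lambda_*)$ with $\lambda_*$ small, in order to show $\inf_{\overline{B_\rho(0)}}J_\lambda<0$; the regime $\lambda\geqslant\lambda^*$ is handled separately by the direct method with the constant test function $u_0$ and never appeals to this lemma. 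So for exactly those $\lambda$ for which the lemma is needed, your argument gives nothing in the equality case, and no choice of $\lambda$ large can rescue it. The correct resolution is that the exponent condition must be read as strict, $q^+<\min\{\varphi_1^-,\varphi_2^-\}$; this is a hypothesis/proof mismatch present in the paper itself (its proofs of Lemmas \ref{lem6} and \ref{lem7} both silently assume strictness), and your write-up should have flagged it as such rather than deferring the equality case to the large-$\lambda$ regime.
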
 
      \begin{proof} Let $\Omega_0\subset \subset \Omega$, for $x_0\in \Omega_0$, $0 < R < 1$ satisfy $B_{2R}(x_0)\subset \Omega_0$, where $B_{2R}(x_0)$
               is the ball of radius $2R$ with center at the point $x_0$ in $\R^N$. Let $\theta\in C_0^\infty(B_{2R}(x_0))$ satisfies $0\leqslant \theta \leqslant 1$ and
               $\theta \equiv 1$  in $B_{2R}(x_0)$. Theorem $\ref{TT}$ implies that $||\theta||<\infty.$ Then for $0 < t < 1$, by  $\hyperref[f2]{(f_2)}$, we have    
               $$
                  \begin{aligned}
                         J_\lambda(t\theta)=&\sum_{i=1}^{2}\Big(
                          \displaystyle\dfrac{1}{2} \int_{\R^{2N}\setminus(C\Omega)^2} \varPhi^i_{x,y}\left( \dfrac{|t\theta(x)-t\theta(y)|}{|x-y|^{s_i(x,y)} }\right) \dfrac{dxdy}{|x-y|^N} +\int_{\Omega}\varPhi^i_x(|t \theta|)dx\\
                          &+\int_{C \Omega}\beta(x) \widehat{\varPhi}^i_{x}(|t\theta|)dx \Big) -\lambda\int_{\Omega}F(x,t\theta)dx\\
                         &\leqslant  ||t\theta||^{\min\left\lbrace \varphi_1^-, \varphi_2^-\right\rbrace}-\lambda c_2\int_{\Omega_0} |t\theta|^{q(x)}dx\\
                         &\leqslant t^{\min\left\lbrace \varphi_1^-, \varphi_2^-\right\rbrace}||\theta||^{\min\left\lbrace \varphi_1^-, \varphi_2^-\right\rbrace}-\lambda c_2t^{q\pm}\int_{\Omega_0} |\theta|^{q(x)}dx.
                        \end{aligned}
                        $$
               Since $\min\left\lbrace \varphi_1^-, \varphi_2^-\right\rbrace > q^+$ and $\displaystyle\int_{\Omega_0} |\theta|^{q(x)}dx>0$ we have $J_\lambda(t_0\theta)<0$ for
               $t_0\in(0,t)$ sufficiently small.
                       \end{proof}    
   \begin{lem}\label{lem7}
   Assume the hypotheses of Theorem $\ref{2.1.}$  are fulfilled. Then for any $\lambda>0$ the functional $J_\lambda$ is coercive.
   \end{lem}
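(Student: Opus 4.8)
The plan is to show that the modular part $I_1$ dominates the perturbation $\lambda\int_{\Omega}F(x,u)\,dx$ once $\|u\|$ is large, exploiting the strict gap $q^+<\min\{\varphi_1^-,\varphi_2^-\}$ built into the hypotheses. First I would bound $I_1$ below by the combined modular $\rho_{s(x,y)}$. Every term defining $\rho_{s(x,y)}$ is nonnegative, because $\varPhi^i_{x,y},\widehat{\varPhi}^i_x\geqslant 0$ and $\beta\geqslant 0$; moreover $I_1$ differs from $\tfrac12\rho_{s(x,y)}$ only in that the local contributions $\sum_{i=1}^{2}\big(\int_{\Omega}\widehat{\varPhi}^i_x(|u|)\,dx+\int_{C\Omega}\beta(x)\widehat{\varPhi}^i_x(|u|)\,dx\big)$ appear with coefficient $1$ instead of $\tfrac12$. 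Hence $I_1(u)\geqslant\tfrac12\rho_{s(x,y)}(u)$ for every $u\in X$, and by the modular--norm comparison of Proposition \ref{Nmod.}, for $\|u\|>1$ this gives
$$
I_1(u)\geqslant \tfrac12\rho_{s(x,y)}(u)\geqslant \tfrac12\|u\|^{\min\{\varphi_1^-,\varphi_2^-\}}.
$$

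Next I would control the nonlinear term using condition \hyperref[f1]{$(f_1)$}, which yields $|F(x,t)|\leqslant c_1|t|^{q(x)}$ and therefore $\lambda\int_{\Omega}F(x,u)\,dx\leqslant \lambda c_1\int_{\Omega}|u|^{q(x)}\,dx$. By Remark \ref{rem1} the embedding $X\hookrightarrow L^{q(x)}(\Omega)$ is continuous, so $\|u\|_{q(x)}\leqslant c\|u\|$; combining this with the standard relation between the Luxemburg norm and the modular in $L^{q(x)}(\Omega)$ (namely $\int_{\Omega}|u|^{q(x)}\,dx\leqslant \|u\|_{q(x)}^{q^+}$ whenever $\|u\|_{q(x)}\geqslant 1$), I obtain, for $\|u\|$ large enough,
$$
\lambda\int_{\Omega}F(x,u)\,dx\leqslant \lambda c_1 c^{q^+}\|u\|^{q^+}.
$$

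Putting the two estimates together yields, for all $u\in X$ with $\|u\|$ sufficiently large,
$$
J_\lambda(u)\geqslant \tfrac12\|u\|^{\min\{\varphi_1^-,\varphi_2^-\}}-\lambda c_1 c^{q^+}\|u\|^{q^+}.
$$
Since by assumption $q^+<\min\{\varphi_1^-,\varphi_2^-\}$, the first term dominates and the right-hand side tends to $+\infty$ as $\|u\|\to\infty$; thus $J_\lambda(u)\to+\infty$, which is exactly coercivity. I do not expect any genuine obstacle here: the entire argument rests on the modular estimates of Proposition \ref{Nmod.} together with the compact (hence continuous) embedding into $L^{q(x)}(\Omega)$, and the only point requiring mild care is tracking whether $\|u\|_{q(x)}$ exceeds $1$ when passing from the modular to the norm in the variable exponent space, a distinction that becomes irrelevant in the limit $\|u\|\to\infty$.
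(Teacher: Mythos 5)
Your proof is correct and follows essentially the same route as the paper: bound $I_1$ from below through the modular--norm comparison of Proposition \ref{Nmod.}, bound the nonlinear term through $(f_1)$ and the embedding $X\hookrightarrow L^{q(x)}(\Omega)$, and conclude from the exponent gap $q^+<\min\{\varphi_1^-,\varphi_2^-\}$. In fact your write-up is slightly more careful than the paper's: the paper implicitly uses $I_1(u)\geqslant\rho_{s(x,y)}(u)$, which is not literally true since the Gagliardo terms in $I_1$ carry the coefficient $\tfrac12$, whereas your bound $I_1(u)\geqslant\tfrac12\rho_{s(x,y)}(u)$ is the correct one, and the harmless factor $\tfrac12$ does not affect coercivity.
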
 
   \begin{proof} For each $u\in X$ with $||u||>1$ and $\lambda>0$, relations $(\ref{Nmod1.})$,  $(\ref{28})$ and the condition \hyperref[f1]{$(f_1)$} imply
      $$
                   \begin{aligned}
                          J_\lambda(u)=& \sum_{i=1}^{2}\Big(\displaystyle\dfrac{1}{2}  \int_{\R^{2N}\setminus(C\Omega)^2} \varPhi^i_{x,y}\left( \dfrac{ |u(x)- u(y)|}{|x-y|^{s_i(x,y)}}\right) \dfrac{dxdy}{|x-y|^N}+\int_{\Omega}\widehat{\varPhi}^i_x\left( |u(x)|\right) dx\\
                                 &+\int_{C \Omega}\beta(x) \widehat{\varPhi}^i_x\left( |u(x)|\right) dx\Big)\\
                          &\geqslant  ||u||^{\min\left\lbrace \varphi_1^-, \varphi_2^-\right\rbrace}-\lambda c_1\int_{\Omega} |u|^{q(x)}dx\\
                          &\geqslant ||u||^{\min\left\lbrace \varphi_1^-, \varphi_2^-\right\rbrace}-\lambda c_1c||u||^{q^\pm}.
                         \end{aligned}
                         $$
    Since $\min\left\lbrace \varphi_1^-, \varphi_2^-\right\rbrace>q^+$ the above inequality   implies that $J_\lambda(u)\longrightarrow \infty$ as $||u||\rightarrow \infty$, that is, $J_\lambda$ is coercive.  \end{proof}
         \begin{proof}[\noindent \textbf{Proof of Theorem $\ref{2.1.}$}] Let $\lambda_*>0$ be defined as in $(\ref{29})$ and $\lambda\in (0,\lambda_*)$. By Lemma $\ref{lem5}$ it follows that on the boundary oh the ball centered in the origin and of radius $\rho$ in $X$, denoted by $B_\rho(0)$, we have 
         $$\inf\limits_{\partial B_\rho(0)}J_\lambda>0.$$
      On the other hand, by Lemma $\ref{lem6}$, there exists $\theta \in X$ such that $J_\lambda(t\theta)<0$ for all $t>0$ small enough. Moreover for any $u\in B_\rho(0)$, we have 
      $$
                      \begin{aligned}
                             J_\lambda(u)\geqslant ||u||^{\max\left\lbrace \varphi_1^+, \varphi_2^+\right\rbrace}-\lambda c_1c||u||^q.
                            \end{aligned}
                            $$
      It follows that
      $$-\infty<c:=\inf\limits_{\overline{B_\rho(0)}} J_\lambda<0.$$   
      We let now $0<\varepsilon <\inf\limits_{\partial  B_\rho(0)}  J_\lambda -  \inf\limits_{B_\rho(0)} J_\lambda.$    Applying Theorem $\ref{th1}$ to the functional 
      $J_\lambda : \overline{B_\rho(0)}\longrightarrow \R$, we find $u_\varepsilon \in \overline{B_\rho(0)}$ such that 
       $$
            \left\{ 
                 \begin{array}{clclc}
               J_\lambda(u_\varepsilon)&<\inf\limits_{\overline{B_\rho(0)}} J_\lambda+\varepsilon,& \\\\
                 J_\lambda(u_\varepsilon)&< J_\lambda(u)+\varepsilon ||u-u_\varepsilon||,& \text{  } u\neq u_\varepsilon.
                 \end{array}
                 \right. 
              $$
       Since  $J_\lambda(u_\varepsilon)\leqslant  \inf\limits_{\overline{B_\rho(0)}} J_\lambda+\varepsilon\leqslant \inf\limits_{B_\rho(0)} J_\lambda+\varepsilon \leqslant \inf\limits_{\partial  B_\rho(0)}  J_\lambda$, we deduce $u_\varepsilon  \in B_\rho(0)$. 
       
       Now, we define $\Lambda_\lambda :  \overline{B_\rho(0)}\longrightarrow \R$ by 
       $$\Lambda_\lambda(u)=J_\lambda(u)+\varepsilon||u-u_\varepsilon||.$$
       It's clear that $u_\varepsilon$ is a minimum point of $\Lambda_\lambda$ and then
       $$\dfrac{\Lambda_\lambda(u_\varepsilon+t v)-\Lambda_\lambda(u_\varepsilon)}{t}\geqslant 0$$ 
       for small $t>0$, and any $v\in B_\rho(0).$ The above relation yields 
           $$\dfrac{J_\lambda(u_\varepsilon+t v)-J_\lambda(u_\varepsilon)}{t}+\varepsilon||v||\geqslant 0.$$
           Letting $t\rightarrow 0$ it follows that $\left\langle J'_{\lambda}(u_\varepsilon),v\right\rangle +\varepsilon ||v||>0$ and we infer that $||J'_{\lambda}(u_\varepsilon)||_*\leqslant \varepsilon$.
            We deduce that there exists a sequence $\left\lbrace v_n\right\rbrace \subset B_\rho(0)$ such that 
            \begin{equation}\label{10}
            J_\lambda(v_n) \longrightarrow c \text{ and } J'_\lambda(v_n)\longrightarrow 0.
            \end{equation}
         It is clear that $\left\lbrace v_n\right\rbrace $ is bounded in $X$. Thus, there exists $v\in X$, such that up to a subsequence   $\left\lbrace v_n\right\rbrace $ converges weakly to $v$ in $X$. Since  $X$ is a compactly embedded in $L^{q(x)}(\Omega)$. The above information combined with condition \hyperref[f1]{$(f_1)$} and   H\"older's inequality implies
       \begin{equation}\label{11}
   \begin{aligned}
         \left| \int_{\Omega} f(x,v_n)(v_n-v)dx\right| &\leqslant c_1\int_{\Omega} \left| v_n\right| ^{q(x)-1}\left| v_n-v\right| dx\\   
         &\leqslant c_1\left| \left| |v_n|^{q(x)-1}\right| \right|_{\frac{q(x)}{q(x)-1}}\left| \left| v_n-v\right| \right|_{q(x)} \longrightarrow 0.
         \end{aligned} 
            \end{equation}
    On the other hand, by $(\ref{10})$ we have 
    \begin{equation}\label{12}
    \lim\limits_{n\rightarrow \infty}\left\langle J'_\lambda(v_n) , v_n-v\right\rangle =0.
    \end{equation}  
    Relations $(\ref{11})$ and $(\ref{12})$ imply 
    $$
     \lim\limits_{n\rightarrow \infty}\left\langle I'_1(v_n) , v_n-v\right\rangle =0.$$
     Thus, by Lemma $\ref{4.4.}$ we find that  $\left\lbrace v_n\right\rbrace $ converges strongly to $v$ in $X$, so by $(\ref{10})$: 
     $$ J_\lambda(v)=c<0 \text{ and }  J'_\lambda(v)=0.$$
   We conclude that $v$ is a nontrivial weak solution for problem   \hyperref[P]{$(\mathcal{P}_{a})$} for any $\lambda\in(0,\lambda_*)$.
   
   Next, by Lemma $\ref{lem7}$ and Proposition $\ref{lem3}$ we infer that $J_\lambda$ is coercive and weakly lower semi continuous in $X$ for all $\lambda>0$. Then Theorem $\ref{th2}$ implies that there exists $u_\lambda \in X$ a global minimized of $J_\lambda$ and thus a weak solution of problem \hyperref[P]{$(\mathcal{P}_{a})$}.
   
   Now, we show that $u_\lambda$ is non trivial. Indeed, letting $t_0>1$ be fixed real and
   $$
               \left\{ 
                    \begin{array}{clclc}
                  u_0(x)&=t_0~~\text{ in }  \Omega & \\\\
                    u_0(x)&=0~~\text{ in }  \R^N\setminus \Omega ,&
                    \end{array}
                    \right. 
                 $$
    we have $ u_0\in X$ and 
   $$
   \begin{aligned}
   J_\lambda(u_0)&=I_1(u_0)-\lambda\int_\Omega F(x,u_0)dx\\
   &=\sum_{i=1}^{2}\Big(\int_\Omega\widehat{\varPhi}^i_x(t_0)dx\Big)-\lambda\int_\Omega F(x,t_0)dx\\
   &\leqslant \sum_{i=1}^{2}\Big(\int_\Omega \widehat{\varPhi}^i_x(t_0)dx\Big)-\lambda c_2\int_\Omega |t_0|^{q(x)} dx\\
   &=L-\lambda c_2|t_0|^{q^-}|\Omega|,
   \end{aligned}
   $$
   where $L$ is a positive constant. Thus, for $\lambda^*>0$ large enough, $J_\lambda(u_0)<0$ for any $\lambda\in [\lambda^*,\infty)$. It follows that $J_\lambda(u_\lambda)<0$ for any $\lambda\in [\lambda^*,\infty)$ and thus $u_\lambda$ is a nontrivial weak solution of problem  \hyperref[P]{$(\mathcal{P}_{a})$} for any $\lambda\in [\lambda^*,\infty)$. Therefore, problem  \hyperref[P]{$(\mathcal{P}_{a})$} has a nontrivial weak solution for all $\lambda\in(0,\lambda_*)\cup [\lambda^*,\infty)$.\end{proof}
      


\end{document}